\begin{document}
\title{A simple framework on sorting permutations 
}


\author{Ricky X. F. Chen         \and
        Christian M. Reidys 
}


\institute{Ricky X. F. Chen \and Christian M. Reidys$^*$\at
              Department of Mathematics and Computer Science, University of Southern Denmark,
              Campusvej 55, DK-5230 Odense M, Denmark\\
              Tel.$^*$: +45-24409251\\
              Fax$^*$: +45-65502325\\
              \email{chen.ricky1982@gmail.com (Ricky X. F. Chen)} \\
              \email{duck@santafe.edu (Christian M. Reidys)}        %
}

\date{Received: date / Accepted: date}

\maketitle

\begin{abstract}
In this paper we present a simple framework to study various distance problems
of permutations, including the transposition and block-interchange distance of permutations
as well as the reversal distance of signed permutations.
These problems are very important in the study of the evolution of genomes.
We give a general formulation for lower bounds of the transposition and block-interchange
distance from which the existing lower bounds obtained by Bafna and Pevzner, and Christie
can be easily derived. As to the reversal distance of signed permutations,
we translate it into a block-interchange distance problem of permutations so that
we obtain a new lower bound. Furthermore, studying distance problems via our framework
motivates several interesting combinatorial problems related to product of permutations,
some of which are studied in this paper as well.

\keywords{Plane permutation \and Transposition distance \and Block-interchange distance
\and Reversal distance}
\end{abstract}

\section{Introduction}
Let $\mathcal{S}_n$ denote the group of permutations, i.e.~the group
of bijections from $[n]=\{1,\dots,n\}$ to $[n]$, where the multiplication
is the composition of maps.
We shall discuss the following three representations of a permutation $\pi$ on $[n]$:\\
\emph{two-line form:} the top line lists all elements in $[n]$, following the natural order.
The bottom line lists the corresponding images of the elements on the top line, i.e.
\begin{eqnarray*}
\pi=\left(\begin{array}{ccccccc}
1&2& 3&\cdots &n-2&{n-1}&n\\
\pi(1)&\pi(2)&\pi(3)&\cdots &\pi({n-2}) &\pi({n-1})&\pi(n)
\end{array}\right).
\end{eqnarray*}
\emph{one-line form:} $\pi$ is represented as a sequence $\pi=\pi(1)\pi(2)\cdots \pi(n-1)\pi(n)$.\\
\emph{cycle form:} regarding $\langle \pi\rangle$ as a cyclic group, we represent $\pi$ by its
collection of orbits (cycles).
The set consisting of the lengths of these disjoint cycles is called the cycle-type of $\pi$.
We can encode this set into a non-increasing integer sequence $\lambda=\lambda_1
\lambda_2\cdots$, where $\sum_i \lambda_i=n$, or as $1^{a_1}2^{a_2}\cdots n^{a_n}$, where
we have $a_i$ cycles of length $i$.
A cycle of length $k$ will be called a $k$-cycle. A cycle of odd and even length will be called an odd
and even cycle, respectively. It is well known that all permutations of a same cycle-type
form a conjugacy class of $\mathcal{S}_n$.

In~\cite{ChR1}, plane permutations were used to study one-face hypermaps.
In particular, a combinatorial proof for the following result of Zagier~\cite{zag} and
Stanley~\cite{stan3} was presented:
the number of $n$-cycles $\omega$, for which
$\omega(12\cdots n)$ has exactly $k$ cycles is $0$,
if $n-k$ is odd and $\frac{2C(n+1,k)}{n(n+1)}$, otherwise, where $C(n,k)$
is the unsigned Stirling number of the first kind.
In this paper, we employ the framework of plane permutations to study sorting permutations,
which is motivated by the transposition action on plane permutations defined in~\cite{ChR1}.
This ties to important problems in the context of bioinformatics,
in particular the evolution of genomes by rearrangements in DNA as well as RNA.
For the related studies and general biological background, we refer to
~\cite{pev1,pev2,pev3,pev4,chri1,chri2,bul,braga,eli,yan} and references therein.

An outline of this paper is as follows. In section $2$, we present a short introduction
on plane permutations, especially we study a natural action on plane permutations which is
related to various sorting operations of permutations. In section $3$,
we study the transposition distance problem of permutations and derive a generally formulated lower bound
which implies the lower bound obtained by Bafna and Pevzner~\cite{pev1} via the cycle-graph model.
Our formula of the lower bound motivates several
optimization problems as well. One of them is to determine $\max_{\gamma} |C(\alpha\gamma)-C(\gamma)|$
for a fixed permutation $\alpha$, where $C(\pi)$ denotes the number of cycles in the permutation $\pi$.
We will solve this optimization problem in this paper.
In Section $4$, we consider the block-interchange distance of permutations and establish the block-interchange
distance formula due to Christie~\cite{chri2}.
In Section $5$, we study the reversal distance of signed permutations.
By translating the reversal distance of signed
permutations into block-interchange distance of permutations with restricted block-interchanges, we
prove a new formula on the lower bound of the reversal distance.
We then observe that this bound is typically equal to the reversal distance.


\section{Plane permutations}

\begin{definition}[Plane permutation]
A plane permutation on $[n]$ is a pair $\mathfrak{p}=(s,\pi)$ where $s=(s_i)_{i=0}^{n-1}$
is an $n$-cycle and $\pi$ is an arbitrary permutation on $[n]$. The permutation $D_{\mathfrak{p}}=
s\circ \pi^{-1}$ is called the diagonal of $\mathfrak{p}$.
\end{definition}\label{2def1}

Given $s=(s_0s_1\cdots s_{n-1})$,
a plane permutation $\mathfrak{p}=(s,\pi)$ can be represented by two aligned rows:
\begin{equation}
(s,\pi)=\left(\begin{array}{ccccc}
s_0&s_1&\cdots &s_{n-2}&s_{n-1}\\
\pi(s_0)&\pi(s_1)&\cdots &\pi(s_{n-2}) &\pi(s_{n-1})
\end{array}\right)
\end{equation}
Indeed, $D_{\mathfrak{p}}$ is determined by the diagonal-pairs (cyclically) in the two-line
representation here, i.e., $D_{\mathfrak{p}}(\pi(s_{i-1}))=s_i$ for $0<i< n$, and
$D_{\mathfrak{p}}(\pi(s_{n-1}))=s_0$.

Given a plane permutation $(s,\pi)$ on $[n]$ and a sequence $h=(i,j,k,l)$, such that $i\leq j<k \leq l$
and $\{i,j,k,l\}\subset [n-1]$, let
$$
s^h=(s_0,s_1,\dots,s_{i-1},\underline{s_k,\dots,s_l},s_{j+1},\dots,s_{k-1},
\underline{s_i,\dots,s_j},s_{l+1},\dots),
$$
i.e.~the $n$-cycle obtained by transposing the blocks $[s_i,s_j]$ and $[s_k,s_l]$ in $s$.
Note that in case of $j+1=k$, we have
$$
s^h=(s_0,s_1,\dots,s_{i-1},\underline{s_k,\dots,s_l},\underline{s_i,\dots,s_j},s_{l+1},\dots).
$$
Let furthermore
$$
\pi^h=D_{\mathfrak{p}}\circ s^h,
$$
that is, the derived plane permutation, $(s^h,\pi^h)$, can be represented as

\begin{eqnarray*}
\left(
\vcenter{\xymatrix@C=0pc@R=1pc{
\cdots s_{i-1}\ar@{->>}[d]  & s_k\ar@{--}[dl] &\cdots & s_l\ar@{--}[dl]\ar@{->>}[d] & s_{j+1} &\hspace{-0.5ex}\cdots\hspace{-0.5ex} &
s_{k-1}\ar@{->>}[d] & s_i\ar@{--}[dl] &\cdots & s_{j}\ar@{--}[dl]\ar@{->>}[d] & s_{l+1}  \cdots\\
\cdots \pi(s_{k-1}) & \pi(s_k) & \cdots & \pi(s_j) & \pi(s_{j+1}) &\hspace{-0.5ex}\cdots\hspace{-0.5ex} & \pi(s_{i-1}) & \pi(s_i) &\cdots & \pi(s_l)& \pi(s_{l+1}) \cdots
}}
\right).
\end{eqnarray*}
We write $(s^h,\pi^h)= \chi_h\circ(s,\pi)$.
Note that the bottom row of the two-row representation of $(s^h,\pi^h)$
is obtained by transposing the blocks $[\pi(s_{i-1}),\pi(s_{j-1})]$ and
$[\pi(s_{k-1}),\pi(s_{l-1})]$ of the bottom row of $(s,\pi)$.
In the following, we refer to general $\chi_h$ as block-interchange and for the special case of $k=j+1$,
we refer to $\chi_h$ as transpose.
As a result, we observe
\begin{lemma}\label{2lemx1}
Let $(s,\pi)$ be a plane permutation on $[n]$ and $(s^h,\pi^h)=\chi_h \circ (s,\pi)$ for $h=(i,j,k,l)$.
Then, $\pi(s_r)=\pi^h(s_r)$ if $r\in \{0,1,\ldots,n-1\}\setminus \{i-1,j,k-1,l\}$, and
for $j+1<k$
\begin{equation*}
\pi^h(s_{i-1})=\pi(s_{k-1}), \quad \pi^h(s_j)=\pi(s_l), \quad \pi^h(s_{k-1})=\pi(s_{i-1}), \quad
\pi^h(s_l)=\pi(s_j)
\end{equation*}
hold and for $j=k-1$ we have
$$
\pi^h(s_{i-1})=\pi(s_{j}), \quad \pi^h(s_j)=\pi(s_l), \quad \pi^h(s_l)=\pi(s_{i-1}).
$$
\end{lemma}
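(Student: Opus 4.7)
The plan is to read the identities directly from the two-row representation of $(s^h,\pi^h)$ displayed just before the lemma. Since $\pi^h=D_{\mathfrak{p}}\circ s^h$ with the same $D_{\mathfrak{p}}$ as for $(s,\pi)$, the derived plane permutation has the same diagonal, so the defining rule
\[
D_{\mathfrak{p}}(\pi(s_{r-1}))=s_r \quad (0<r<n),\qquad D_{\mathfrak{p}}(\pi(s_{n-1}))=s_0,
\]
is also satisfied with $s$ and $\pi$ replaced by $s^h$ and $\pi^h$. Every identity to be proved therefore reduces to comparing the cyclic successor of $s_r$ in $s^h$ with the one in $s$.

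First, I would unwind the definition of $s^h$ as an explicit ordered list and record that, away from the boundary positions, the cyclic successor of $s_r$ in $s^h$ equals $s_{r+1}$, identical to the one in $s$. By the diagonal relation this immediately yields $\pi^h(s_r)=\pi(s_r)$ for every $r\notin\{i-1,j,k-1,l\}$ (respectively $r\notin\{i-1,j,l\}$ in the collapsed case), which is the first assertion of the lemma.

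Second, at each boundary position I would read the cyclic successor in $s^h$ from that same list and rewrite it in the form $D_{\mathfrak{p}}(\pi(\cdot))$ using the original defining relation, which via the invariance of $D_{\mathfrak{p}}$ pins down the value of $\pi^h$. In the generic case $j+1<k$ this produces
\begin{align*}
\pi^h(s_{i-1}) &= \pi(s_{k-1}), & \pi^h(s_j) &= \pi(s_l),\\
\pi^h(s_{k-1}) &= \pi(s_{i-1}), & \pi^h(s_l) &= \pi(s_j),
\end{align*}
since, for instance, $s^h(s_{i-1})=s_k=D_{\mathfrak{p}}(\pi(s_{k-1}))$, and analogously for the three remaining boundary positions.

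In the collapsed case $j=k-1$, the middle block $[s_{j+1},s_{k-1}]$ is empty, so the explicit list for $s^h$ no longer contains $s_{k-1}$ as an independent boundary position; instead one reads off $s^h(s_{i-1})=s_{j+1}$, $s^h(s_j)=s_{l+1}$, and $s^h(s_l)=s_i$, giving the three fused identities stated in the lemma. The only genuine obstacle is the index bookkeeping at the boundaries, especially in the collapsed case where two of the four generic identities merge; writing $s^h$ out as an explicit ordered list in each of the two cases removes all ambiguity and reduces the lemma to inspection.
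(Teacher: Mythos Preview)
Your proposal is correct and follows essentially the same route as the paper: the paper does not give a separate proof but presents the lemma as a direct observation from the displayed two-row representation of $(s^h,\pi^h)$, and your argument simply makes that reading explicit by tracking, via the preserved diagonal $D_{\mathfrak{p}}$, the cyclic successor of each $s_r$ in $s^h$ and comparing it to its successor in $s$. The only cosmetic difference is that you spell out the relation $\pi^h(s_r)=D_{\mathfrak{p}}^{-1}\bigl(s^h(s_r)\bigr)$ in words, whereas the paper leaves this implicit in the diagram.
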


We shall proceed by analyzing the induced changes of the $\pi$-cycles when passing
to $\pi^h$. By Lemma~\ref{2lemx1}, only the $\pi$-cycles containing $s_{i-1}$, $s_{j}$, $s_{k-1}$,
$s_l$ will be affected.

\begin{lemma}\label{2lem4}
Let $(s^h, \pi^h)=\chi_h \circ (s, \pi)$, where $h=(i,j,j+1,l)$.
Then there exist the following six scenarios for the pairs $(\pi,\pi^h)$:

\begin{center}
\begin{tabular}{|c|c|c|}
\hline
Case~$1$ &$\pi$& $(s_{i-1},v_1^i,\ldots v_{m_i}^i),  (s_j, v_1^j,\ldots v_{m_j}^j), (s_l,v_1^l,\ldots v_{m_l}^l)$\\\cline{2-3}
&$\pi^h$&$ (s_{i-1},v_1^j,\ldots v_{m_j}^j, s_{j}, v_1^l,\ldots v_{m_l}^l, s_l, v_1^i, \ldots v_{m_i}^i)$\\
\hline\hline
Case~$2$ &$\pi$& $(s_{i-1},v_1^i,\ldots v_{m_i}^i, s_l, v_1^l,\ldots v_{m_l}^l, s_j, v_1^j, \ldots v_{m_j}^j)$\\\cline{2-3}
&$\pi^h$& $(s_{i-1}, v_1^j,\ldots v_{m_j}^j),  (s_j, v_1^l,\ldots v_{m_l}^l), (s_l, v_1^i, \ldots v_{m_i}^i)$\\
\hline\hline
Case~$3$ &$\pi$& $(s_{i-1},v_1^i,\ldots v_{m_i}^i, s_j, v_1^j, \ldots v_{m_j}^j, s_l, v_1^l,\ldots v_{m_l}^l)$\\\cline{2-3}
&$\pi^h$& $(s_{i-1},v_1^j,\ldots v_{m_j}^j, s_l, v_1^i,\ldots v_{m_i}^i, s_j, v_1^l, \ldots v_{m_l}^l)$\\
\hline\hline
Case~$4$ &$\pi$& $(s_{i-1}, v_1^i, \ldots v_{m_i}^i, s_{j}, v_1^j, \ldots v_{m_j}^j), (s_l, v_1^l, \ldots v_{m_l}^l)$\\\cline{2-3}
&$\pi^h$&$ (s_{i-1}, v_1^j, \ldots v_{m_j}^j), (s_{j}, v_1^l, \ldots v_{m_l}^l, s_l, v_1^i, \ldots v_{m_i}^i)$\\
\hline\hline
Case~$5$ &$\pi$& $(s_{i-1}, v_1^i, \ldots v_{m_i}^i),  (s_{j}, v_1^j, \ldots v_{m_j}^j, s_l, v_1^l, \ldots v_{m_l}^l)$\\\cline{2-3}
&$\pi^h$&$ (s_{i-1}, v_1^{j},\ldots v_{m_j}^j, s_l, v_1^i, \ldots v_{m_i}^i), (s_{j}, v_1^l, \ldots v_{m_l}^l)$\\
\hline\hline
Case~$6$ &$\pi$& $(s_{i-1}, v_1^i, \ldots v_{m_i}^i, s_l, v_1^l, \ldots v_{m_l}^l), (s_{j}, v_1^j, \ldots v_{m_j}^j)$\\\cline{2-3}
&$\pi^h$&$ (s_{i-1}, v_1^j, \ldots v_{m_j}^j, s_{j}, v_1^l, \ldots v_{m_l}^l), (s_l, v_1^i, \ldots v_{m_i}^i)$\\
\hline
\end{tabular}
\end{center}
\end{lemma}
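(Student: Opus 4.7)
\smallskip

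\noindent\textbf{Proof proposal.} By Lemma~\ref{2lemx1} specialized to $k=j+1$, we have $\pi^h(x)=\pi(x)$ for every $x\notin T:=\{s_{i-1},s_j,s_l\}$, while
\[
\pi^h(s_{i-1})=\pi(s_j),\qquad \pi^h(s_j)=\pi(s_l),\qquad \pi^h(s_l)=\pi(s_{i-1}).
\]
Hence only those $\pi$-cycles that meet $T$ can possibly change when passing to $\pi^h$, and it suffices to describe these. The plan is to parametrize the situation through the ``first-return'' permutation on $T$ and use the three-element group $S_3$ as the case index.

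Define $\sigma\colon T\to T$ by letting $\sigma(x)$ be the first element of $T$ encountered along the forward $\pi$-orbit of $x$ (i.e.\ $\pi^r(x)$ with the least $r\geq 1$ such that $\pi^r(x)\in T$). Since $\pi$ is a bijection, $\sigma\in S_3$. For each $x\in T$ write the intermediate $\pi$-orbit segment as $v^x_1,\dots,v^x_{m_x}$, where $v^x_k=\pi^k(x)$ and $\pi(v^x_{m_x})=\sigma(x)$; using relabeling $v^i,v^j,v^l$ for $x=s_{i-1},s_j,s_l$, the $\pi$-cycles through $T$ are determined by the orbits of $\sigma$ together with these three segments. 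The six elements of $S_3$ then yield exactly six configurations for the $\pi$-row of the table: the identity $\sigma$ puts $s_{i-1},s_j,s_l$ in three separate cycles (Case~1); the two $3$-cycles of $S_3$ give the two possible ``all in one cycle'' orderings (Cases~2 and~3); the three transpositions correspond to exactly one pair being in a common cycle with the third element isolated (Cases~4, 5, 6). A short direct check pins down which transposition or $3$-cycle is which case.

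For $\pi^h$ the same recipe applies. Since $\pi^h$ agrees with $\pi$ on the segments $v^i,v^j,v^l$, following $\pi^h$ from $s_{i-1}$ means jumping to $\pi(s_j)=v^j_1$ and then following $\pi$ until one re-enters $T$; this lands at $\sigma(s_j)$. Analogous statements hold for $s_j$ and $s_l$. Writing $\tau=(s_{i-1},s_j,s_l)\in S_3$ (the $3$-cycle induced by the reroute), one obtains the clean identity
\[
\sigma^h \;=\; \sigma\circ\tau.
\]
Thus the effect of $\chi_h$ on the first-return data is right multiplication by $\tau$, and the updated cycle structure of $\pi^h$ is read off from $\sigma^h$ together with the same segments $v^i,v^j,v^l$ (now inserted according to the orbits of $\sigma^h$).

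The proof then reduces to six routine verifications: for each of the six $\sigma\in S_3$, compute $\sigma^h=\sigma\tau$, translate $\sigma$ and $\sigma^h$ into explicit cycles with the $v$-segments interleaved, and check that the result coincides with the corresponding row of the table. The only real obstacle is bookkeeping, ensuring that the $v$-segments are attached in the correct order; the first-return viewpoint removes any ambiguity because the order of the $v$-segments is forced by the orbits of $\sigma$ (resp.\ $\sigma^h$). No other technicalities arise, since the degenerate sub-cases where some $m_x=0$ or where $v$-segments are empty fit the same formulas verbatim.
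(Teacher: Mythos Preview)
Your argument is correct. Both you and the paper rely on the same input, namely the reroute formulas from Lemma~\ref{2lemx1}, and both establish the six rows by following $\pi^h$-orbits through the three critical points $s_{i-1},s_j,s_l$. The paper simply does this by hand for Cases~1 and~2 (tracing $(s_{i-1},\pi^h(s_{i-1}),(\pi^h)^2(s_{i-1}),\dots)$ explicitly) and declares the remaining four cases analogous. Your version packages the same computation more systematically: you encode the $\pi$-cycle pattern as a first-return permutation $\sigma\in S_3$ on $T=\{s_{i-1},s_j,s_l\}$ and observe the clean identity $\sigma^h=\sigma\tau$ with $\tau=(s_{i-1},s_j,s_l)$, so that the six cases are precisely the six elements of $S_3$ and the whole table is generated uniformly. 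This is not a different proof so much as a tidier organization of the paper's direct verification; what it buys is that the enumeration of cases is automatic (you cannot miss one) and the segment placement is forced by $\sigma^h$ rather than reconstructed ad hoc. One small point worth stating explicitly for completeness: the constraint $i\le j<j+1\le l$ with $i\ge 1$ forces $i-1<j<l$, so the three elements of $T$ are genuinely distinct and $\sigma$ is indeed a well-defined element of $S_3$.
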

\begin{proof} We shall only prove Case~$1$ and Case~$2$, the remaining four cases
can be shown analogously.
For Case~$1$, the $\pi$-cycles containing $s_{i-1}$,~$s_j$,~$s_l$ are
$$
(s_{i-1},v_1^i,\ldots v_{m_i}^i),  (s_j, v_1^j,\ldots v_{m_j}^j), (s_l,v_1^l,\ldots v_{m_l}^l).
$$
Lemma~\ref{2lemx1} allows us to identify the new cycle structure by inspecting the critical
points $s_{i-1}$, $s_j$ and $s_l$.
Here we observe that all three cycles merge and form a single $\pi^h$-cycle
\begin{eqnarray*}
(s_{i-1}, \pi^h(s_{i-1}),(\pi^h)^2(s_{i-1}),\ldots)&=&(s_{i-1}, \pi(s_j), \pi^2(s_j),\ldots )\\
&=& (s_{i-1},v_1^j,\ldots v_{m_j}^j, s_{j}, v_1^l,\ldots v_{m_l}^l, s_l, v_1^i, \ldots v_{m_i}^i).
\end{eqnarray*}
For Case $2$, the $\pi$-cycle containing $s_{i-1}$,~$s_j$,~$s_l$ is
$$
(s_{i-1},v_1^i,\ldots v_{m_i}^i, s_l, v_1^l,\ldots v_{m_l}^l, s_j, v_1^j, \ldots v_{m_j}^j).
$$
We compute the $\pi^h$-cycles containing $s_{i-1}$, $s_j$ and $s_l$ in $\pi^h$ as
\begin{eqnarray*}
(s_{i-1},\pi^h(s_{i-1}),(\pi^h)^2(s_{i-1}),\ldots)&=&(s_{i-1},\pi(s_{j}),
\pi^2(s_{j}),\ldots)=(s_{i-1}, v_1^j,\ldots v_{m_j}^j)\\
(s_j,\pi^h(s_j),(\pi^h)^2(s_j),\ldots)&=&(s_j,\pi(s_{l}),\pi^2(s_{l}),\ldots)=(s_j, v_1^l,\ldots v_{m_l}^l)\\
(s_l,\pi^h(s_l),(\pi^h)^2(s_l),\ldots)&=&(s_l,\pi(s_{i-1}),\pi^2(s_{i-1}),\ldots)=(s_l, v_1^i,\ldots v_{m_i}^i)
\end{eqnarray*}
whence the lemma.
\qed
\end{proof}

If we wish to express which cycles are impacted by a transpose of scenario $k$ acting on a plane
permutation, we shall say ``the cycles are acted upon by a Case~$k$ transpose''.

We next observe
\begin{lemma}\label{2lem3}
Let $\mathfrak{p}^h=\chi_h \circ \mathfrak{p}$ where $\chi_h$ is a transpose.
Then the difference of the number of cycles of $\mathfrak{p}$ and $\mathfrak{p}^h$ is even.
Furthermore the difference of the number of cycles, odd cycles, even cycles between
$\mathfrak{p}$ and $\mathfrak{p}^h$ is contained in $\{-2,0,2\}$.
\end{lemma}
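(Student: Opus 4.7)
The plan is to derive everything directly from the explicit cycle-structure data given by Lemma~\ref{2lem4}. For the total cycle count the six scenarios already settle the question: in Case~$1$ three $\pi$-cycles merge into a single $\pi^h$-cycle, in Case~$2$ one $\pi$-cycle splits into three $\pi^h$-cycles, and in Cases~$3$--$6$ the number of affected cycles (one or two) is preserved on passing from $\pi$ to $\pi^h$. Hence $C(\pi^h)-C(\pi)\in\{-2,0,2\}$, and in particular this difference is even.

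For the refined statement about odd and even cycles I would read off the lengths of the affected cycles in terms of $m_i,m_j,m_l$ (the lengths of the auxiliary strings $v^{i},v^{j},v^{l}$). For example, in Case~$1$ the three old cycle lengths are $m_i+1$, $m_j+1$, $m_l+1$ and the single new length is $m_i+m_j+m_l+3$, so the parity of every affected cycle is a function of $(m_i,m_j,m_l)\bmod 2$ alone; entirely analogous formulas follow from the tables in the other five cases, e.g.\ in Case~$4$ the old lengths are $m_i+m_j+2$ and $m_l+1$ while the new ones are $m_j+1$ and $m_i+m_l+2$. In each case I would then tabulate the eight parity patterns of $(m_i,m_j,m_l)$ and verify by inspection that the number of odd cycles changes by $-2$, $0$, or $+2$. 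Since the total change already lies in $\{-2,0,2\}$, the change in the number of even cycles is then forced into the same set.

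The bulk of the work is bookkeeping rather than ideas; the only temptation is to look for a single uniform identity covering all six scenarios, which does not seem to exist. A clean cross-check for the evenness half of the claim is available without Lemma~\ref{2lem4}: both $s$ and $s^h$ are $n$-cycles and are therefore of the same sign $(-1)^{n-1}$, while $\pi$ and $\pi^h$ arise from $s$ and $s^h$ by composition with the common factor $D_{\mathfrak{p}}^{\pm 1}$, so $\operatorname{sgn}(\pi)=\operatorname{sgn}(\pi^h)$; together with $\operatorname{sgn}(\sigma)=(-1)^{n-C(\sigma)}$ this gives $C(\pi)\equiv C(\pi^h)\pmod 2$. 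The full $\{-2,0,2\}$ bound on each of the three refined counts, however, appears to genuinely require the structural input of Lemma~\ref{2lem4}, and that bookkeeping is the step I expect to be the main (if routine) obstacle.
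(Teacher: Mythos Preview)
Your handling of the total cycle count matches the paper exactly: both read off $C(\pi^h)-C(\pi)\in\{-2,0,2\}$ directly from the six cases of Lemma~\ref{2lem4}. For the odd-cycle count, however, the paper bypasses the $6\times 8$ tabulation you propose. The uniform argument you suspected might not exist does exist: the set of elements appearing in the affected cycles is identical before and after the transpose, so the total length $N$ of these cycles is preserved; since $N\equiv(\text{number of odd-length cycles among them})\pmod 2$, the parity of the odd-cycle count among the affected cycles is unchanged. Coupling this with the crude observation that at most three cycles are involved on either side (so the difference in odd-cycle counts lies in $[-3,3]$) forces that difference into $\{-2,0,2\}$. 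The even-cycle statement is then immediate from $(\text{change in even})=(\text{change in total})-(\text{change in odd})$. Your case-by-case verification would certainly succeed, and your sign-of-permutation cross-check is a nice alternative for the evenness half, but the parity-of-$N$ observation is the shortcut the paper actually takes.
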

\begin{proof}
Lemma~\ref{2lem4} implies that the difference of the numbers of cycles of $\pi$ and $\pi^h$
is even.
As for the statement about odd cycles, since the parity of the total number of elements
contained in the cycles containing $s_{i-1}$, $s_{j}$ and $s_l$ is preserved, the difference of the
number of odd cycles is even. Consequently, the difference of the number of even cycles is also
even whence the lemma.
\qed
\end{proof}

Suppose we are given $h=(i,j,k,l)$, where $j+1<k$. Then using the strategy of the proof
of Lemma~\ref{2lem4}, we have
\begin{lemma}\label{2lem5}
Let $(s^h, \pi^h)=\chi_h \circ (s, \pi)$, where $h=(i,j,k,l)$ and $j+1<k$.
Then, the difference of the numbers of $\pi$-cycles and $\pi^h$-cycles is contained in
$\{-2,0,2\}$. Furthermore, the scenarios, where the number of $\pi^h$-cycles increases by $2$, are
given by:
\begin{center}
\begin{tabular}{|c|c|c|}
\hline
Case~$a$ &$\pi$& $(s_{i-1}, v_1^i,\ldots v_{m_i}^i, s_j, v_1^j,\ldots v_{m_j}^j, s_{l}, v_1^l, \ldots v_{m_l}^l,s_{k-1}, v_1^k, \ldots v_{m_k}^k)$\\\cline{2-3}
&$\pi^h$&$(s_{i-1}, v_1^k, \ldots v_{m_k}^k),(s_j, v_1^l, \ldots v_{m_l}^l, s_{k-1}, v_1^i,\ldots v_{m_i}^i),(s_l, v_1^j, \ldots v_{m_j}^j) $\\
\hline\hline
Case~$b$ &$\pi$& $(s_{i-1}, v_1^i,\ldots v_{m_i}^i, s_{k-1}, v_1^k,\ldots v_{m_k}^k, s_{j}, v_1^j, \ldots v_{m_j}^j,s_l, v_1^l, \ldots v_{m_l}^l)$\\\cline{2-3}
&$\pi^h$&$(s_{i-1}, v_1^k, \ldots v_{m_k}^k, s_j, v_1^l, \ldots v_{m_l}^l), (s_{k-1}, v_1^i,\ldots v_{m_i}^i),(s_l, v_1^j, \ldots v_{m_j}^j) $\\
\hline\hline
Case~$c$ &$\pi$& $(s_{i-1}, v_1^i,\ldots v_{m_i}^i, s_{k-1}, v_1^k,\ldots v_{m_k}^k, s_{l}, v_1^l, \ldots v_{m_l}^l,s_j, v_1^j, \ldots v_{m_j}^j)$\\\cline{2-3}
&$\pi^h$&$(s_{i-1}, v_1^k, \ldots v_{m_k}^k, s_l, v_1^j, \ldots v_{m_j}^j), (s_{k-1}, v_1^i,\ldots v_{m_i}^i),(s_j, v_1^l, \ldots v_{m_l}^l) $\\
\hline\hline
Case~$d$ &$\pi$& $(s_{i-1}, v_1^i,\ldots v_{m_i}^i, s_l, v_1^l,\ldots v_{m_l}^l, s_{j}, v_1^j, \ldots v_{m_j}^j,s_{k-1}, v_1^k, \ldots v_{m_k}^k)$\\\cline{2-3}
&$\pi^h$&$(s_{i-1}, v_1^k, \ldots v_{m_k}^k),(s_j, v_1^l, \ldots v_{m_l}^l), (s_{k-1}, v_1^i,\ldots v_{m_i}^i, s_l, v_1^j, \ldots v_{m_j}^j) $\\
\hline\hline
Case~$e$ &$\pi$& $(s_{i-1}, v_1^i,\ldots v_{m_i}^i, s_{k-1}, v_1^k,\ldots v_{m_k}^k), (s_{j}, v_1^j, \ldots v_{m_j}^j,s_l, v_1^l, \ldots v_{m_l}^l)$\\\cline{2-3}
&$\pi^h$&$(s_{i-1}, v_1^k, \ldots v_{m_k}^k),(s_j, v_1^l, \ldots v_{m_l}^l), (s_{k-1}, v_1^i,\ldots v_{m_i}^i),(s_l, v_1^j, \ldots v_{m_j}^j) $\\
\hline
\end{tabular}
\end{center}

\end{lemma}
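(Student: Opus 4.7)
The plan is to follow the same case-analysis strategy used in the proof of Lemma~\ref{2lem4}, but to streamline the bookkeeping by repackaging Lemma~\ref{2lemx1} as the identity $\pi^h=\pi\circ\sigma$ on all of $[n]$, where
\[
\sigma=(s_{i-1}\ s_{k-1})(s_j\ s_l)
\]
is a product of two disjoint transpositions.  Because multiplying a permutation by a single transposition $\tau$ changes its cycle count by exactly $\pm 1$ (by $+1$ when the two transposed points share a cycle and by $-1$ otherwise), composing with $\sigma=\tau_1\tau_2$ immediately yields $C(\pi^h)-C(\pi)\in\{-2,0,+2\}$, which settles the first assertion.

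For the gain $C(\pi^h)=C(\pi)+2$ to occur, both of $\tau_1=(s_{i-1}\ s_{k-1})$ and $\tau_2=(s_j\ s_l)$ must split cycles.  Since $\tau_1$ and $\tau_2$ commute, the argument is symmetric in the two, and a necessary condition is that both $\{s_{i-1},s_{k-1}\}$ and $\{s_j,s_l\}$ are pairs sharing a $\pi$-cycle.  This leaves two macroscopic configurations: either all four critical points lie in a single $\pi$-cycle, or they are distributed into two disjoint $\pi$-cycles, one carrying $\{s_{i-1},s_{k-1}\}$ and the other $\{s_j,s_l\}$.  The second configuration automatically yields a gain of $2$ and is precisely Case~$e$.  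For the first configuration, fixing $s_{i-1}$ as the root of the cycle there are $3!=6$ cyclic orders of $\{s_j,s_{k-1},s_l\}$; a direct computation using the splitting formula for $\pi\tau_1$ shows that $s_j$ and $s_l$ remain in a common $\pi\tau_1$-cycle in precisely four of these, namely the four non-interleaving orders, which correspond one-to-one to Cases $a$--$d$.  The two interleaving orders $(s_{i-1},s_j,s_{k-1},s_l)$ and $(s_{i-1},s_l,s_{k-1},s_j)$ instead give a net change of $0$ and so do not appear in the list.

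It then remains to read off the explicit cycle structure of $\pi^h$ in each of the five cases, which one accomplishes by chasing the orbits of $s_{i-1},s_j,s_{k-1},s_l$ under $\pi^h=\pi\circ\sigma$, exactly as in the proof of Lemma~\ref{2lem4}.  The main obstacle is the bookkeeping of this last step: one must verify that in each of Cases $a$--$d$ the two resulting new cycles re-assemble the blocks $v^i,v^j,v^k,v^l$ in the correct cyclic order given in the table, and that the two interleaving arrangements are the only ones that fail to produce a gain.  Apart from this enumeration the argument is mechanical.
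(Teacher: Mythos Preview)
Your argument is correct. The paper does not give an explicit proof of this lemma: it only says that the strategy of Lemma~\ref{2lem4} (direct orbit-chasing for each possible distribution of the four critical points among $\pi$-cycles) carries over.

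Your approach is a genuine and useful repackaging of that strategy. Rewriting Lemma~\ref{2lemx1} as $\pi^h=\pi\sigma$ with $\sigma=(s_{i-1}\,s_{k-1})(s_j\,s_l)$ gives the first assertion $C(\pi^h)-C(\pi)\in\{-2,0,2\}$ immediately from the standard transposition lemma, with no case analysis at all. For the $+2$ classification, the commutativity of $\tau_1$ and $\tau_2$ lets you extract the clean necessary condition that each of the pairs $\{s_{i-1},s_{k-1}\}$ and $\{s_j,s_l\}$ lies in a single $\pi$-cycle, and the single-cycle subcase then reduces to a non-interleaving criterion, which explains conceptually why exactly four of the six cyclic orders give gain $+2$ while Case~$e$ is the unique two-cycle configuration. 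The paper's implicit method would instead enumerate all cycle-distributions of the four points and trace orbits in each, arriving at the same table without this structural explanation. The final step you flag---tracing orbits to write out the explicit $\pi^h$-cycles in Cases~$a$--$e$---is common to both approaches and is indeed just bookkeeping.
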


Now we are ready to study sorting permutations.
The main idea is to utilize various
block-transposition actions on plane permutations, motivated by the study of transposition
actions on the boundary component of fatgraphs \cite{reidys2}, where a topological
framework for studying reversal distance of signed permutations was presented.

\section{Transposition distance}

In this section, we shall use the one-line representation of permutations, i.e., we consider them to be
sequences. Given a sequence on $[n]$
$$
s=a_1\cdots a_{i-1}a_i\cdots a_ja_{j+1}\cdots a_k a_{k+1}\cdots a_{n},
$$
a transposition action on $s$ means to change $s$ into
$$
s'=a_1\cdots a_{i-1}a_{j+1}\cdots a_k a_i\cdots a_j a_{k+1}\cdots a_{n},
$$
for some $1\leq i\leq j<k\leq n$. Let $e_n=123\cdots n$.
The transposition distance of a sequence $s$ on $[n]$ is the minimum number of transpositions
needed to sort $s$ into $e_n$. Denote this distance as $td(s)$.

Let $C(\pi)$,~$C_{odd}(\pi)$ and $C_{ev}(\pi)$ denote the number of cycles, the number of odd cycles
and the number of even cycles in $\pi$, respectively. Furthermore, let $[n]^*=\{0,1,\ldots, n\}$, and
$$
\hat{e}_n=(0123\cdots n), \quad \bar{s}=(0,a_1,a_2,\cdots, a_n), \quad p_t=(n,n-1,\ldots,1,0).
$$

\begin{theorem}\label{5thm1}
\begin{eqnarray}
td(s)\geq \max_{\gamma}\left\{\frac{\max\{|C(p_t\bar{s}\gamma)-C(\gamma)|,|C_{odd}(p_t\bar{s}\gamma)-C_{odd}(\gamma)|,
|C_{ev}(p_t\bar{s}\gamma)-C_{ev}(\gamma)|\}}{2}\right\},
\end{eqnarray}
where $\gamma$ ranges over all permutations on $[n]^*$.
\end{theorem}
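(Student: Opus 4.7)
The plan is to translate sorting by transpositions into an action on plane permutations and then read off the bound from Lemma~\ref{2lem3}. First I would pick an arbitrary permutation $\pi_0$ on $[n]^*$ and assemble the plane permutation $\mathfrak{p}_0=(\bar s,\pi_0)$. A transposition on a sequence that turns $s$ into $s'$ corresponds to a transpose $\chi_h$ with $h=(i,j,j+1,l)$ and $i\ge 1$ acting on $\mathfrak{p}_0$, because the action on the top row permutes $\bar s$ in exactly the manner the transposition permutes $s$, leaving the fixed symbol $0$ untouched; thus $\bar{s}^{\,h}=\overline{s'}$. Iterating along any minimum-length sorting sequence lifts it to a sequence of $td(s)$ transposes converting $\mathfrak{p}_0$ into $(\hat e_n,\pi_m)$ for some terminal $\pi_m$ on $[n]^*$.

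Next I would exploit the fact that each transpose preserves the diagonal $D_{\mathfrak p}=s\circ\pi^{-1}$ (a direct consequence of how $\pi^h$ is defined). Applied to the whole sorting, this forces $\bar s\,\pi_0^{-1}=\hat e_n\,\pi_m^{-1}$, and hence
\[
\pi_m^{-1}\;=\;\hat e_n^{-1}\bar s\,\pi_0^{-1}\;=\;p_t\bar s\,\pi_0^{-1},
\]
using the identity $p_t=\hat e_n^{-1}$. Because a permutation and its inverse share the same cycle type, we obtain $C(\pi_m)=C(p_t\bar s\,\pi_0^{-1})$ as well as the corresponding identities for $C_{odd}$ and $C_{ev}$; likewise $C_\star(\pi_0)=C_\star(\pi_0^{-1})$ for each of the three flavours $\star\in\{\,,\,odd,\,ev\,\}$.

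Lemma~\ref{2lem3} now finishes the argument: each transpose changes $C$, $C_{odd}$, and $C_{ev}$ by at most $2$ in absolute value. Summing these per-step bounds along the $td(s)$ transposes gives
\[
2\,td(s)\;\ge\;|C(\pi_m)-C(\pi_0)|\;=\;|C(p_t\bar s\,\pi_0^{-1})-C(\pi_0^{-1})|,
\]
together with the analogous inequalities for $C_{odd}$ and $C_{ev}$. Setting $\gamma=\pi_0^{-1}$, which ranges over all permutations of $[n]^*$ as $\pi_0$ does, and taking the maximum over $\gamma$ and over the three cycle-count flavours yields precisely the stated lower bound.

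The only genuinely delicate step is the bookkeeping that certifies the bijective correspondence between sequence transpositions and plane-permutation transposes, in particular making sure that the prepended symbol $0$ is never absorbed into a moving block (arranged by the condition $i\ge 1$). Beyond that small verification and the elementary identity $p_t=\hat e_n^{-1}$, the proof is essentially a combination of the diagonal-invariance principle and the per-step cycle-change bound supplied by Lemma~\ref{2lem3}.
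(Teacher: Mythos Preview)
Your proof is correct and follows essentially the same approach as the paper: form a plane permutation $(\bar s,\pi_0)$, observe that each sequence transposition induces a transpose that preserves the diagonal, deduce the terminal $\pi_m$ from $\bar s\,\pi_0^{-1}=\hat e_n\,\pi_m^{-1}$, and then apply the per-step bound of Lemma~\ref{2lem3}. The paper is slightly terser, but the logic---including the inversion step that converts $\gamma\bar s^{-1}\hat e_n$ into $p_t\bar s\,\gamma^{-1}$ via $p_t=\hat e_n^{-1}$---is the same as yours.
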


\begin{proof} 
For an arbitrary permutation $\gamma$ on $[n]^*$, $\mathfrak{p}=(\bar{s},\gamma)$ is a plane permutation.
By construction, each transposition on the sequence $s$ induces a transpose on $\mathfrak{p}$.
If $s$ changes to $e_n$ by a series of transpositions, we have, for some $\beta$, that $\mathfrak{p}$ changes into the
plane permutation $(\hat{e}_n, \beta)$.
By construction, we have
$$
D_{\mathfrak{p}}=\bar{s}\gamma^{-1}=\hat{e}_n\beta^{-1},
$$
and accordingly
$$
\beta=\gamma {\bar{s}}^{-1}\hat{e}_n.
$$
Since each transpose changes the number of cycles
by at most $2$ according to Lemma~\ref{2lem3},
at least $\frac{|C(\gamma {\bar{s}}^{-1}\hat{e}_n)-C(\gamma)|}{2}=
\frac{|C(p_t\bar{s}\gamma^{-1})-C(\gamma^{-1})|}{2}$
transposes are needed from $\gamma$ to $\beta$.
The same argument also applies to deriving the lower bounds in terms of odd and even cycles, respectively.
Note that $\gamma$ can be arbitrarily selected, then the proof follows.  \qed
\end{proof}

The most common model used to study transposition distance is cycle-graph proposed by Bafna and Pevzner~\cite{pev1}.
Given a permutation $s=s_1s_2\cdots s_n$ on $[n]$, the cycle graph $G(s)$ of $s$ is obtained
as follows:
add two additional elements $s_0=0$ and $s_{n+1}=n+1$. The vertices of $G(s)$ are the elements in $[n+1]^*$.
Draw a directed black edge from $i+1$ to $i$, and draw a directed gray edge from
$s_i$ to $s_{i+1}$, we then obtain $G(s)$.
An alternating cycle in $G(s)$ is a directed cycle, where its edges alternate in color.
An alternating cycle is called odd if the number of black edges is odd.
Bafna and Pevzner obtained lower and upper bound for $td(s)$ in terms of
the number of cycles and odd cycles of $G(s)$~\cite{pev1}.

By examining the cycle graph model $G(s)$ of a permutation $s$, it turns out
the cycle graph $G(s)$ is actually the directed graph representation of the
product $p_t\bar{s}$, if we identify the two auxiliary points $0$ and $n+1$.
The directed graph representation of a permutation $\pi$ is the directed graph
by drawing an directed edge from $i$ to $\pi(i)$. If we color the directed edge
of $\bar{s}$ gray and the directed edge of $p_t$ black, an alternating cycle
then determines a cycle of the permutation $p_t\bar{s}$. Therefore, the number of
cycles and odd cycles in $p_t\bar{s}$ is equal to the number of
cycles and odd cycles in $G(s)$, respectively.
As results, Theorem~\ref{5thm1} immediately implies

\begin{corollary}[Bafna and Pevzner~\cite{pev1}]\label{cor-bp}
\begin{eqnarray}
td(s)& \geq & \frac{n+1-C(p_t\bar{s})}{2},\\
td(s)& \geq & \frac{n+1-C_{odd}(p_t\bar{s})}{2}.
\end{eqnarray}
\end{corollary}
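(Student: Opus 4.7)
The plan is to obtain both inequalities as a single specialization of Theorem~\ref{5thm1}, namely by taking $\gamma$ to be the identity permutation on $[n]^*$. Since the theorem supplies a lower bound for \emph{every} choice of $\gamma$, it is enough to exhibit one such $\gamma$ that already produces the right-hand sides stated in the corollary.

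For $\gamma=\mathrm{id}_{[n]^*}$, the cycle decomposition consists of $n+1$ fixed points, so that $C(\gamma)=n+1$, and since every $1$-cycle is odd one also has $C_{odd}(\gamma)=n+1$ (while $C_{ev}(\gamma)=0$). Substituting into Theorem~\ref{5thm1}, the inner expression becomes
$$\frac{\max\bigl\{\,|C(p_t\bar{s})-(n+1)|,\;|C_{odd}(p_t\bar{s})-(n+1)|,\;C_{ev}(p_t\bar{s})\,\bigr\}}{2}.$$
Because $p_t\bar{s}$ is a permutation on the $(n+1)$-element set $[n]^*$, both $C(p_t\bar{s})$ and $C_{odd}(p_t\bar{s})$ are at most $n+1$, so the two absolute values unfold as $(n+1)-C(p_t\bar{s})$ and $(n+1)-C_{odd}(p_t\bar{s})$. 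Each of these in turn is dominated by the overall maximum, which yields the two bounds of the corollary.

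There is no real obstacle: the corollary is essentially a verification that the general bound of Theorem~\ref{5thm1} specializes to the Bafna--Pevzner inequalities when $\gamma$ is chosen as symmetrically as possible. The only auxiliary ingredient is the identification between the cycle graph $G(s)$ and the directed-graph representation of $p_t\bar{s}$ spelled out in the paragraph preceding the corollary, which is what recasts $C(p_t\bar{s})$ and $C_{odd}(p_t\bar{s})$ as the cycle and odd-cycle counts of $G(s)$ used in~\cite{pev1}.
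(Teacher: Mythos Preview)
Your proof is correct and follows the same strategy as the paper: specialize Theorem~\ref{5thm1} to a single well-chosen $\gamma$. The only difference is the particular choice---you take $\gamma=\mathrm{id}_{[n]^*}$, whereas the paper takes $\gamma=(p_t\bar{s})^{-1}$; since $|C(p_t\bar{s}\cdot\mathrm{id})-C(\mathrm{id})|=|C(\mathrm{id})-C((p_t\bar{s})^{-1})|$ (and likewise for $C_{odd}$), the two specializations yield exactly the same bounds, as later confirmed in Theorem~\ref{5thm3}.
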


\begin{proof}
Setting $\gamma=(p_t\bar{s})^{-1}$ in Theorem~\ref{5thm1} implies the corollary. \qed
\end{proof}

In view of Theorem~\ref{5thm1}, employing an appropriate $\gamma$, it is possible to obtain a better lower
bound than what are in Corollary~\ref{cor-bp}.
This motivates the following problems: given a permutation $\pi$, what is the maximum number of
$|C(\pi\gamma)-C(\gamma)|$ (resp. $|C_{odd}(\pi\gamma)-C_{odd}(\gamma)|$, $|C_{ev}(\pi\gamma)-C_{ev}(\gamma)|$),
where $\gamma$ ranges over a set of permutations.

More generally, we can study the distribution functions
\begin{equation}
\sum_{\gamma\in A} z^{C(\pi\gamma)-C(\gamma)},\quad \sum_{\gamma\in A} z^{C_{odd}(\pi\gamma)-C_{odd}(\gamma)},
\quad \sum_{\gamma\in A} z^{C_{ev}(\pi\gamma)-C_{ev}(\gamma)},
\end{equation}
where $A$ is a set of permutations, e.g., a conjugacy class or all permutations.
In this paper, we will later determine $\max_{\gamma}\{|C(\pi\gamma)-C(\gamma)|\}$ for
an arbitrary permutation $\pi$. Surprisingly, the maximum for this case is achieved when
$\gamma=\pi^{-1}$ or $\gamma$ is the identity permutation.
For the other two problems in terms of odd cycle and
even cycle, we are unable to solve it at present.
Is it likely that the maxima are achieved when $\gamma=\pi^{-1}$ or $\gamma$ is the identity permutation
as well?

Another approach to obtain a better lower bound is fixing $\gamma$ and figuring out these unavoidable
transposes which do not increase (or decrease) the number of cycles (or odd, or even cycles) from
$\gamma$ to $\gamma {\bar{s}}^{-1}\hat{e}_n$. In particular, by setting $\gamma=p_t \bar{s}$,
it is not hard to analyze the number of ``hurdles" similar as in Christie~\cite{chri1} in the framework
of plane permutations,
which we do not go into detail here.

\section{Block-interchange distance of permutations}

A more general transposition problem, where the involved two blocks are not necessarily adjacent,
was studied in Christie~\cite{chri2}. It is referred to as the block-interchange distance problem.
The minimum number of block-interchanges needed to sort $s$ into $e_n$ is accordingly called the
block-interchange distance of $s$ and denoted as $bid(s)$.
Clearly, Lemma~\ref{2lem5} may facilitate the study of the block-interchange distance.
\begin{lemma}\label{5lem1}
Let $\mathfrak{p}=(\bar{s},\pi)$ be a plane permutation on $[n]^*$
where $D_{\mathfrak{p}}=p_t^{-1}$ and $\bar{s}\neq \hat{e}_n$.
Then, there exist $\bar{s}_{i-1}<_{\bar{s}} \bar{s}_j <_{\bar{s}} \bar{s}_{k-1} \leq_{\bar{s}} \bar{s}_l$ such that
$$
\pi(\bar{s}_{i-1})=\bar{s}_{k-1}, \quad \pi(\bar{s}_l)=\bar{s}_j.
$$
\end{lemma}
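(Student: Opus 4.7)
The plan is to reformulate the lemma into a tractable condition on an auxiliary permutation $\phi$ of $\{0,\ldots,n\}$ and then induct on $n$.

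\emph{Reformulation.} From $D_{\mathfrak{p}}=\bar{s}\pi^{-1}=p_t^{-1}$ one gets $\pi=p_t\circ\bar{s}$, so $\pi(\bar{s}_r)=p_t(\bar{s}_{r+1})=\bar{s}_{r+1}-1\pmod{n+1}$ for every $r$. Write $q(x)$ for the position of $x$ in $\bar{s}$ and set $\phi(r):=q(\bar{s}_r+1\bmod (n+1))$. Then the two equations $\pi(\bar{s}_{i-1})=\bar{s}_{k-1}$ and $\pi(\bar{s}_l)=\bar{s}_j$ become $\phi(k-1)=i$ and $\phi(j)=l+1$; with $r_1:=j$ and $r_2:=k-1$, the ordering $i-1<j<k-1\leq l$ translates into the requirement of finding $r_1<r_2$ in $\{0,\ldots,n\}$ satisfying $1\leq\phi(r_2)\leq r_1$ and $r_2\leq(\phi(r_1)-1)\bmod(n+1)$. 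The key observation is that $\phi=q\circ\hat{e}_n\circ q^{-1}$, so $\phi$ is a single $(n+1)$-cycle on $\{0,\ldots,n\}$.

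\emph{Induction on $n$.} The bases $n\leq 2$ are immediate (for $n=1$ the hypothesis is vacuous; $n=2$ admits only $\bar{s}=(0,2,1)$ and is checked directly). For the inductive step, split on $\bar{s}_n$. If $\bar{s}_n=n$, then $\bar{s}':=(0,\bar{s}_1,\ldots,\bar{s}_{n-1})$ is an $n$-cycle on $\{0,\ldots,n-1\}$ distinct from $\hat{e}_{n-1}$, $\pi$ fixes $n$, and the associated $\pi'$ agrees with $\pi$ on $\{0,\ldots,n-1\}$; applying the induction hypothesis to $\bar{s}'$ produces $(i,j,k,l)$ that works verbatim for $\bar{s}$. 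If instead $\bar{s}_n\ne n$, set $r_1:=q(n)\leq n-1$, so that $\phi(r_1)=q(0)=0$ and the third condition on $r_2$ is automatic, leaving only to find $r_2\in\{r_1+1,\ldots,n\}$ with $1\leq\phi(r_2)\leq r_1$. Now $\{r_1+1,\ldots,n\}$ is proper and nonempty, hence cannot be $\phi$-invariant (as $\phi$ is a single cycle), so some $r_2$ there satisfies $\phi(r_2)\leq r_1$; moreover $\phi(r_2)=0$ would force $\bar{s}_{r_2}=n$ and thus $r_2=q(n)=r_1$, contradicting $r_2>r_1$.

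The main obstacle is the reformulation step: identifying the auxiliary $\phi$ and carefully handling the cyclic wraparound at $\phi(r_1)=0$ (which corresponds to $l=n$). Once $\phi$ is recognized as a conjugate of $\hat{e}_n$, so that it is a single $(n+1)$-cycle, the pigeonhole argument in the main case is very short and the inductive reduction is routine.
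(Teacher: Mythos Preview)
Your argument is correct. After the reformulation, the two cases of the inductive step both go through: when $\bar{s}_n=n$ the restriction to $\{0,\ldots,n-1\}$ really does reproduce the same lemma on $n-1$ letters (since $\pi$ fixes $n$ and $\pi'=\pi$ on the rest), and when $\bar{s}_n\neq n$ the choice $r_1=q(n)$ gives $\phi(r_1)=0$, so that $l=n$ and condition~(iii) becomes vacuous; the pigeonhole ``a proper nonempty subset cannot be $\phi$-invariant'' then supplies $r_2$, and $\phi(r_2)=0$ is excluded because $\phi^{-1}(0)=r_1$.

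However, your route is genuinely different from the paper's. The paper gives a short direct construction with no induction and no auxiliary permutation: it takes $x$ to be the \emph{largest} value with $x+1$ appearing before $x$ in $\bar{s}$, sets $\bar{s}_i=x+1$ and $\bar{s}_{k-1}=x$, and then takes $y$ to be the value-wise maximum strictly between positions $i-1$ and $k-1$ (which exists since $x+1$ sits there); maximality of $x$ forces $y+1$ to lie to the right of position $k-1$, giving $\bar{s}_{l+1}=y+1$ and hence $\pi(\bar{s}_l)=y=\bar{s}_j$. The whole argument is five lines and yields explicit ``greedy'' positions that plug straight into the sorting algorithm of Theorem~\ref{5thm2}. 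Your approach instead recognises that $\phi=q\circ\hat{e}_n\circ q^{-1}$ is a single $(n{+}1)$-cycle on positions and exploits only that structural fact; this buys a very clean non-constructive existence step (no maximality bookkeeping), at the cost of an inductive reduction to handle the fixed-tail case $\bar{s}_n=n$ and some care with the cyclic wraparound $\phi(r_1)=0$.
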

\begin{proof} Since $\bar{s}\neq \hat{e}_n$, there exists $x\in [n]$ such that $x+1<_{\bar{s}} x$.
Assume $x=\bar{s}_{k-1}$ is the largest such integer and let $\bar{s}_{i}=x+1$.
Then, $\pi(\bar{s}_{i-1})=x=\bar{s}_{k-1}$ since $D_{\mathfrak{p}}(\pi(\bar{s}_{i-1}))
=\pi(\bar{s}_{i-1})+1=x+1$.
Between $\bar{s}_{i-1}$ and $x$, find the largest integer which is larger than $x$.
Since $x+1$ lies between $\bar{s}_{i-1}$ and $x$, this maximum exists and we denote it by $y$. Then we
have by construction
$$
\bar{s}_{i-1}<_{\bar{s}} \bar{s}_j=y <_{\bar{s}} x= \bar{s}_{k-1} <_{\bar{s}} y+1=\bar{s}_{l+1}.
$$
Therefore, $\pi(\bar{s}_{l})=D_{\mathfrak{p}}^{-1}(y+1)=y=\bar{s}_j$, whence the lemma. \qed
\end{proof}

Now we can derive the exact block-interchange distance formula obtained by Christie~\cite{chri2}.
\begin{theorem}[Christie~\cite{chri2}]\label{5thm2}
\begin{equation}
bid(s)= \frac{n+1-C(p_t\bar{s})}{2}.
\end{equation}
\end{theorem}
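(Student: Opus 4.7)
The plan is to bracket $bid(s)$ both below and above by $\frac{(n+1)-C(p_t\bar{s})}{2}$, using the single distinguished plane permutation $\mathfrak{p}=(\bar{s},\,p_t\bar{s})$. Its diagonal equals $D_{\mathfrak{p}}=\bar{s}(p_t\bar{s})^{-1}=p_t^{-1}$, and since $D_{\mathfrak{p}}$ is preserved under every block-interchange action $\chi_h$, the hypothesis of Lemma~\ref{5lem1} persists along any sequence of block-interchanges applied to $\bar{s}$. This single choice of $\gamma=p_t\bar{s}$ will serve both directions of the inequality, so no optimization over $\gamma$ (as in Theorem~\ref{5thm1}) is needed here.

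For the lower bound I would follow the template of Theorem~\ref{5thm1}. Each block-interchange on $s$ lifts to a $\chi_h$ on $\mathfrak{p}$, and by Lemma~\ref{2lem5} (with Lemma~\ref{2lem3} covering the transpose sub-case) every such action changes the number of cycles of the second coordinate by at most $2$. Sorting $s$ into $e_n$ transforms $\mathfrak{p}$ into $(\hat{e}_n,\beta)$ with $\beta=(p_t\bar{s})\,\bar{s}^{-1}\,\hat{e}_n=p_t\hat{e}_n=\mathrm{id}$, a permutation on $[n]^*$ with exactly $n+1$ cycles. Consequently at least $\frac{(n+1)-C(p_t\bar{s})}{2}$ block-interchanges are required.

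For the matching upper bound I would build an explicit sorting sequence by iterating Lemma~\ref{5lem1}. Let $\pi$ denote the current second coordinate of $\mathfrak{p}$. As long as $\bar{s}\neq\hat{e}_n$, the lemma produces indices $h=(i,j,k,l)$ with $j+1<k$ (since $\bar{s}_j<_{\bar{s}}\bar{s}_{k-1}$ strictly) and with $\pi(\bar{s}_{i-1})=\bar{s}_{k-1}$, $\pi(\bar{s}_l)=\bar{s}_j$. Reading these identities off against the tables of Lemma~\ref{2lem5}, only Case~$c$ and Case~$e$ are consistent, both forcing the filler segments $v^i$ and $v^l$ to be empty, and both increase $C(\pi)$ by exactly $2$. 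After applying $\chi_h$ the diagonal is still $p_t^{-1}$, so the iteration continues; since $C(\pi)$ grows by $2$ at each step while bounded by $n+1$, and since the preserved identity $\bar{s}=p_t^{-1}\pi$ makes $\pi=\mathrm{id}$ equivalent to $\bar{s}=\hat{e}_n$, the procedure terminates in exactly $\frac{(n+1)-C(p_t\bar{s})}{2}$ steps.

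The one genuinely delicate point is the case-matching step in the upper bound: one must check that the configuration produced by Lemma~\ref{5lem1} can only fall into the ``$+2$'' scenarios of Lemma~\ref{2lem5} and is incompatible with Cases~$a$, $b$, $d$ (which would force $\pi(\bar{s}_{i-1})$ to equal $\bar{s}_j$ or $\bar{s}_l$ instead of $\bar{s}_{k-1}$). Once that compatibility is settled, the rest reduces to a clean induction on $n+1-C(p_t\bar{s})$.
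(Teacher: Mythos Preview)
Your approach is exactly the paper's: iterate Lemma~\ref{5lem1} to exhibit a $+2$ move at every step, and combine with the Theorem~\ref{5thm1}-style lower bound for $\gamma=(p_t\bar{s})^{-1}$. There is, however, a genuine gap in the upper-bound half. Lemma~\ref{5lem1} only guarantees $\bar{s}_{k-1}\leq_{\bar{s}}\bar{s}_l$, i.e.\ $k-1\le l$. You correctly extract $j+1<k$ from $\bar{s}_j<_{\bar{s}}\bar{s}_{k-1}$, but you never verify the remaining requirement $k\le l$ for $h=(i,j,k,l)$ to be an admissible block-interchange tuple. When the lemma returns $k-1=l$ (and it can: take $\bar{s}=(0,2,1,3)$, where the lemma gives $(i,j,k,l)=(1,1,3,2)$), one has $k=l+1>l$, so $h$ is not a legal input and Lemma~\ref{2lem5} does not apply at all.

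The paper treats this boundary case separately. When $k-1=l$, the identities $\pi(\bar{s}_{i-1})=\bar{s}_{k-1}=\bar{s}_l$ and $\pi(\bar{s}_l)=\bar{s}_j$ place $\bar{s}_{i-1},\bar{s}_l,\bar{s}_j$ on a single $\pi$-cycle in the order $(\bar{s}_{i-1},\bar{s}_l,\ldots,\bar{s}_j,\ldots)$; this is precisely Case~$2$ of Lemma~\ref{2lem4} for the \emph{transpose} $h'=(i,j,j+1,l)$, which also increases the cycle count by~$2$, and the induction goes through. As a smaller side remark: in your final paragraph the configurations you need to rule out are not Cases~$a$, $b$, $d$ of Lemma~\ref{2lem5} (those are already $+2$ scenarios) but the unlisted $0$ and $-2$ cases; the two adjacencies $\pi(\bar{s}_{i-1})=\bar{s}_{k-1}$ and $\pi(\bar{s}_l)=\bar{s}_j$ force the four markers into either one cycle $(\bar{s}_{i-1},\bar{s}_{k-1},\ldots,\bar{s}_l,\bar{s}_j,\ldots)$ or two cycles $(\bar{s}_{i-1},\bar{s}_{k-1},\ldots)(\bar{s}_j,\ldots,\bar{s}_l)$, which are exactly Cases~$c$ and~$e$, so that part of your check is in fact complete once stated correctly.
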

\begin{proof}
Let $\mathfrak{p}=(\bar{s},\pi)$ be a plane permutation on $[n]^*$
where $D_{\mathfrak{p}}=p_t^{-1}$ and $\bar{s}\neq \hat{e}_n$.
According to Lemma~\ref{5lem1}, we either have ${\bar{s}}_{i-1}<_{\bar{s}} {\bar{s}}_j <_{\bar{s}} {\bar{s}}_{k-1} <_{\bar{s}} {\bar{s}}_l$
such that we either have $\pi$-cycle
$$
({\bar{s}}_{i-1},{\bar{s}}_{k-1},\ldots {\bar{s}}_l,{\bar{s}}_j,\ldots)\quad \mbox{or} \quad ({\bar{s}}_{i-1},{\bar{s}}_{k-1},\ldots)({\bar{s}}_l,{\bar{s}}_j,\ldots),
$$
or ${\bar{s}}_{i-1}<_{\bar{s}} {\bar{s}}_j <_{\bar{s}} {\bar{s}}_{k-1} =_{\bar{s}} {\bar{s}}_l$ such that
we have the $\pi$-cycle $({\bar{s}}_{i-1},{\bar{s}}_{k-1},{\bar{s}}_j,\ldots)$.
For the former case, the determined $\chi_h$ is either Case~$c$ or Case~$e$ of Lemma~\ref{2lem5}.
For the latter case, the determined $\chi_h$ is Case~$2$ transpose of Lemma~\ref{2lem4}.
Therefore, no matter which case, we can always find a block-interchange to
increase the number of cycles by $2$. Then,  arguing as in Theorem~\ref{5thm1} completes
the proof.\qed
\end{proof}

Note that each block-interchange can be achieved by at most two transpositions, i.e.,
\begin{align*}
&[i,j,j+1,\ldots k-1,k, l]\rightarrow [k,l,j+1,\ldots k-1, i,j] \Longleftrightarrow \\
&[i,j,j+1,\ldots k-1,k, l]\rightarrow [k,l,i,j,j+1,\ldots k-1]\rightarrow [k,l,j+1,\ldots k-1, i,j].
\end{align*}
Then, we immediately obtain an upper bound for the transposition distance.

\begin{corollary}\cite{pev1}
\begin{align}
td(s)\leq n+1-C(p_t \bar{s}).
\end{align}
\end{corollary}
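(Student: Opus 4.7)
The plan is to chain together the block-interchange distance formula from Theorem~\ref{5thm2} with the elementary observation, already displayed just above the corollary, that a single block-interchange can be realized as a composition of at most two transpositions. No new combinatorial analysis is required; the statement is a direct consequence of what has been established.

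More concretely, I would proceed as follows. First, by Theorem~\ref{5thm2} there exists a sequence of exactly $\tfrac{n+1-C(p_t\bar{s})}{2}$ block-interchanges that sort $s$ into $e_n$. Second, the displayed identity
\begin{align*}
[i,j,j+1,\ldots,k-1,k,l] &\rightarrow [k,l,j+1,\ldots,k-1,i,j]\\
&\Longleftrightarrow\\
[i,j,j+1,\ldots,k-1,k,l] &\rightarrow [k,l,i,j,j+1,\ldots,k-1]\rightarrow [k,l,j+1,\ldots,k-1,i,j]
\end{align*}
shows that each such block-interchange can be replaced by at most two transpositions (exactly one when the blocks already happen to be adjacent, i.e.\ when $k=j+1$). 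Substituting these replacements into the sorting sequence yields a sequence of at most $2\cdot\tfrac{n+1-C(p_t\bar{s})}{2}=n+1-C(p_t\bar{s})$ transpositions which sorts $s$ into $e_n$, giving the desired upper bound on $td(s)$.

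There is no real obstacle here: the only point worth double-checking is that the two-transposition simulation is genuinely valid in the one-line sequence model, i.e.\ that after the first transposition the two formerly non-adjacent blocks are in fact adjacent and in the right relative order for the second transposition to finish the interchange. This is immediate from the displayed rewriting, so the corollary follows at once from Theorem~\ref{5thm2}.
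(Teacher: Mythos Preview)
Your argument is correct and follows exactly the paper's reasoning: Theorem~\ref{5thm2} gives $bid(s)=\tfrac{n+1-C(p_t\bar{s})}{2}$, and the displayed two-step rewriting shows each block-interchange costs at most two transpositions, so $td(s)\le 2\,bid(s)=n+1-C(p_t\bar{s})$. The paper presents this as an immediate consequence with no additional details beyond what you have written.
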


Furthermore, Zagier and Stanley's result mentioned earlier implies that

\begin{corollary}\label{5cor7}
Let $bid_k(n)$ denote he number of sequences $s$ on $[n]$ such that $bid(s)=k$.
Then,
\begin{equation}
bid_k(n)=\frac{2C(n+2,n+1-2k)}{(n+1)(n+2)}.
\end{equation}
\end{corollary}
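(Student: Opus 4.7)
The plan is to reduce the enumeration of sequences by block-interchange distance to a cycle factorization problem in the symmetric group, and then invoke the Zagier–Stanley formula stated in the introduction. By Theorem~\ref{5thm2}, the condition $bid(s)=k$ is equivalent to $C(p_t\bar{s}) = n+1-2k$, so I would first rewrite
\begin{equation*}
bid_k(n) = \#\bigl\{ s \in \mathcal{S}_n : C(p_t\bar{s}) = n+1-2k \bigr\}.
\end{equation*}

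Next I would identify the indexing set. Each sequence $s=a_1a_2\cdots a_n$ on $[n]$ determines the $(n+1)$-cycle $\bar{s} = (0,a_1,\dots,a_n)$ on $[n]^*$, and conversely every $(n+1)$-cycle on $[n]^*$ admits a unique expression of this form (by choosing the representative starting at $0$). Thus the map $s\mapsto\bar{s}$ is a bijection between $\mathcal{S}_n$ and the set of $(n+1)$-cycles on the $(n+1)$-element set $[n]^*$. So the counting problem becomes: how many $(n+1)$-cycles $\omega$ on $[n]^*$ satisfy $C(p_t\,\omega) = n+1-2k$?

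The Zagier–Stanley formula recalled in the introduction counts, for the \emph{canonical} $m$-cycle $(1\,2\,\cdots\,m)$, the number of $m$-cycles $\omega$ with $\omega(1\,2\,\cdots\,m)$ having exactly $r$ cycles, yielding $\tfrac{2C(m+1,r)}{m(m+1)}$ when $m-r$ is even and $0$ otherwise. Since the quantity $\#\{\omega : \omega \text{ is an } m\text{-cycle},\, C(\sigma\omega)=r\}$ depends only on the conjugacy class of $\sigma$ (a straightforward conjugation argument: if $\tau\sigma\tau^{-1}=\sigma'$ then $\omega\mapsto\tau\omega\tau^{-1}$ is a bijection between the two sets and preserves cycle counts), I can apply the formula with $\sigma = p_t$, which is itself an $(n+1)$-cycle, taking $m=n+1$ and $r=n+1-2k$. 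The parity condition $(n+1)-(n+1-2k) = 2k$ is automatically even, and the formula outputs
\begin{equation*}
\frac{2\,C\!\left((n+1)+1,\; n+1-2k\right)}{(n+1)(n+2)} \;=\; \frac{2\,C(n+2,\,n+1-2k)}{(n+1)(n+2)},
\end{equation*}
which is exactly the claimed value of $bid_k(n)$.

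The only nontrivial step is justifying the invariance under the choice of $(n+1)$-cycle $\sigma$, since the Zagier–Stanley statement is phrased specifically for $(1\,2\,\cdots\,m)$ while we need it for $p_t = (0\,n\,n{-}1\,\cdots\,1)$. This conjugation argument is routine but worth stating explicitly, as it is the hinge that lets one apply an enumeration theorem about one distinguished long cycle to an arbitrary one. No other obstacle is anticipated: the reduction to cycle counting via Theorem~\ref{5thm2} is immediate, and the bijection between sequences and $(n+1)$-cycles rooted at $0$ is tautological.
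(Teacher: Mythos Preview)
Your proposal is correct and follows exactly the same approach as the paper: reduce via Theorem~\ref{5thm2} to counting $(n+1)$-cycles $\bar{s}$ with $C(p_t\bar{s})=n+1-2k$, then invoke Zagier--Stanley. The only difference is that you make explicit the conjugation argument (passing from the canonical long cycle to $p_t$), which the paper leaves implicit.
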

\begin{proof}
Let
$$
k=bid(s)= \frac{n+1-C(p_t\bar{s})}{2}.
$$
The number of $s$ such that $bid(s)=k$ is equal to the number of permutation $\bar{s}$
such that $C(p_t \bar{s})=n+1-2k$. Then, applying Zagier and Stanley's result
completes the proof. \qed
\end{proof}

We note that the corollary above was also used by Bona and Flynn~\cite{bona} to compute the average number of
block-interchanges needed to sort permutations.

In view of Theorem~\ref{5thm1} and Theorem~\ref{5thm2}, we are now in position to answer
one of the optimization problems mentioned earlier.

\begin{theorem}\label{5thm3}
Let $\alpha$ be a permutation on $[n]$ and $n\geq 1$. Then we have
\begin{equation}
\max_{\gamma}\{|C(\alpha \gamma)-C(\gamma)|\}=n-C(\alpha),
\end{equation}
where $\gamma$ ranges over all permutations on $[n]$.
\end{theorem}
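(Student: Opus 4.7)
The plan is to establish the equality by proving the lower and upper bounds separately, relying on the standard relationship between cycle count and transposition length in $\mathcal{S}_n$.

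For the lower bound, I would exhibit explicit $\gamma$ achieving the value $n - C(\alpha)$. Taking $\gamma$ to be the identity gives $\alpha\gamma = \alpha$ and $C(\gamma) = n$, hence $|C(\alpha\gamma) - C(\gamma)| = n - C(\alpha)$. Equivalently, $\gamma = \alpha^{-1}$ yields $\alpha\gamma = \mathrm{id}$ with $n$ cycles while $C(\alpha^{-1}) = C(\alpha)$, again giving $|C(\alpha\gamma) - C(\gamma)| = n - C(\alpha)$. This matches the authors' remark that both distinguished choices attain the optimum.

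For the upper bound, I would invoke two classical facts. First, for any $\sigma \in \mathcal{S}_n$ and any transposition $\tau = (a\,b)$, one has $C(\tau\sigma) - C(\sigma) = \pm 1$: the value is $+1$ when $a$ and $b$ lie in a common $\sigma$-cycle (the cycle splits in two) and $-1$ otherwise (two $\sigma$-cycles merge). Second, the minimum number of transpositions needed to factor $\alpha$ equals exactly $n - C(\alpha)$. Writing $\alpha = \tau_1 \tau_2 \cdots \tau_m$ with $m = n - C(\alpha)$ and interpolating through the sequence
$$\gamma,\ \tau_m\gamma,\ \tau_{m-1}\tau_m\gamma,\ \ldots,\ \tau_1\cdots\tau_m\gamma = \alpha\gamma,$$
each successive step changes $C$ by exactly $\pm 1$, so by the triangle inequality $|C(\alpha\gamma) - C(\gamma)| \leq m = n - C(\alpha)$, uniformly in $\gamma$.

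There is no genuine obstacle here: the theorem is essentially an immediate consequence of these two textbook facts about $\mathcal{S}_n$, and the only subtle point is recognizing that the ``surprise'' observed by the authors --- the maximum being attained at both $\gamma = \mathrm{id}$ and $\gamma = \alpha^{-1}$ --- is built into the lower-bound construction. The plane-permutation machinery developed in Section~2, while indispensable for the sorting-distance results of Sections~3--5, is not needed for this particular statement; it is a purely group-theoretic fact about reflection length in the symmetric group.
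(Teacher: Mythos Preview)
Your proof is correct and takes a genuinely more elementary route than the paper. The paper does not argue via the minimal transposition factorization of $\alpha$; instead it first proves the special case $\alpha=p_t\bar{s}$ on $[n]^*$ by combining the inequality $bid(s)\ge \tfrac{1}{2}\max_\gamma|C(p_t\bar{s}\gamma)-C(\gamma)|$ (obtained as in Theorem~\ref{5thm1}) with Christie's exact formula $bid(s)=\tfrac{1}{2}\bigl(n+1-C(p_t\bar{s})\bigr)$ (Theorem~\ref{5thm2}), then extends to every even permutation via the factorization of an even permutation into two $(n{+}1)$-cycles together with conjugation, and finally handles odd permutations by writing $\alpha=\alpha'\tau$ with $\alpha'$ even and $C(\alpha')=C(\alpha)+1$ and applying a one-step triangle inequality. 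Your argument replaces this entire detour by the two textbook facts that $\alpha$ is a product of exactly $n-C(\alpha)$ transpositions and that each transposition shifts the cycle count by $\pm 1$. What the paper's route buys is an explicit link back to the sorting problem, exhibiting Theorem~\ref{5thm3} as a by-product of the block-interchange distance formula within the plane-permutation framework; what your route buys is brevity and independence from Sections~2--4.
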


\begin{proof}
{\it Claim.} for arbitrary $s$, we have
\begin{equation}
\max_\gamma\{|C(p_t\bar{s} \gamma)-C(\gamma)|\}=n+1-C(p_t\bar{s}),
\end{equation}
where $\gamma$ ranges over all permutations on $[n]^*$.\\
To prove the Claim, we argue as in Theorem~\ref{5thm1}, that
$$
bid(s)\geq \frac{\max_{\gamma}\{|C(p_t\bar{s} \gamma)-C(\gamma)|\}}{2}
$$
holds. On the other hand, Theorem~\ref{5thm2} guarantees
$$
bid(s)=\frac{n+1-C(p_t\bar{s})}{2}=\left.\frac{|C(p_t\bar{s} \gamma)-C(\gamma)|}{2}\right|_{\gamma=(p_t\bar{s})^{-1}},
$$
which means the maximum is achieved when $\gamma=(p_t\bar{s})^{-1}$, whence the Claim.

We now use the fact that any even permutation $\alpha'$ on $[n]^*$ has a factorization into
two $(n+1)$-cycles. Assume $\alpha'=\beta_1\beta_2$ where $\beta_1, \beta_2$ are two
$(n+1)$-cycles, and $p_t=\theta \beta_1 \theta^{-1}$.
Then, we have
\begin{eqnarray*}
\max_{\gamma}\{|C(\alpha' \gamma)-C(\gamma)|\}&=&\max_{\gamma}\{|C(\theta \beta_1\beta_2\gamma\theta^{-1})-C(\theta\gamma\theta^{-1})|\}\\
&=&\max_{\gamma}\{|C(p_t\theta\beta_2\theta^{-1}\theta\gamma\theta^{-1})-C(\theta\gamma\theta^{-1})|\}\\
&=& n+1-C(p_t\theta\beta_2\theta^{-1})\\
&=& n+1- C(\beta_1\beta_2)=n+1-C(\alpha').
\end{eqnarray*}
So the theorem holds for even permutations.
Next we assume $\alpha$ is an odd permutation. If $C(\alpha)<n$, then we can always find a transposition $\tau$ (i.e., a cycle of length $2$)
such that $\alpha= \alpha' \tau$, $\alpha'$ is an even permutation and $C(\alpha)=C(\alpha')-1$. Thus,
\begin{align*}
\max_{\gamma}\{|C(\alpha \gamma)-C(\gamma)|\}&= \max_{\gamma}\{|C(\alpha' \tau\gamma)-C(\gamma)|\}\\
&=\max_{\gamma}\{|C(\alpha' \tau\gamma)-C(\tau\gamma)+C(\tau\gamma)-C(\gamma)|\}\\
& \leq \max_{\gamma}\{|C(\alpha' \tau\gamma)-C(\tau\gamma)|+|C(\tau\gamma)-C(\gamma)|\}\\
&=[n-C(\alpha')]+1=n-C(\alpha).
\end{align*}
Note that $|C(\alpha I)-C(I)|=n-C(\alpha)$, where $I$ is the the identity permutation.
Hence, we conclude that $\max_{\gamma}\{|C(\alpha \gamma)-C(\gamma)|\}=n-C(\alpha)$.
When $C(\alpha)=n$, i.e., $\alpha=I$, it is obvious that $\max_{\gamma}\{|C(I \gamma)-C(\gamma)|\}=0=n-C(\alpha)$.
Hence, the theorem holds for odd permutations as well.
This completes the proof.\qed
\end{proof}

\section{Reversal distance for signed permutations}
In this section, we consider the reversal distance for signed permutations, a problem
extensively studied in the context of genome evolution~\cite{pev2,pev3,yan} and references therein.
Lower bounds for the reversal
distance based on the breakpoint graph model were obtained in~\cite{pev2,pev3,pev4}.

In our framework the reversal distance problem can be expressed as a
block-interchange distance problem. A lower bound can be easily obtained in
this point of view, and the lower bound will be shown to be the exact reversal distance
for most of signed permutations.

Let $[n]^{-}=\{-1,-2,\ldots, -n\}$.
\begin{definition}
A signed permutation on $[n]$ is a pair $(a,w)$ where $a$ is
a sequence on $[n]$ while $w$ is a word of length $n$ on the
alphabet set $\{+,-\}$.
\end{definition}
Usually, a signed permutation is represented by a single sequence
$a_w=a_{w,1}a_{w,2}\cdots a_{w,n}$ where $a_{w,k}=w_ka_k$, i.e.,
each $a_k$ carries a sign determined by $w_k$.

Given a signed permutation $a=a_1a_2\cdots a_{i-1}a_i a_{i+1}\cdots a_{j-1}
a_j a_{j+1}\cdots a_n$ on $[n]$, a reversal $\varrho_{i,j}$ acting on
$a$ will change $a$ into
$$
a'=\varrho_{i,j}\diamond a =a_1a_2\cdots a_{i-1}(-a_j)(-a_{j-1})\cdots
(-a_{i+1})(-a_i)a_{j+1}\cdots a_n.
$$
The reversal distance $d_r(a)$ of a signed permutation $a$ on $[n]$ is the minimum
number of reversals needed to sort $a$ into $e_n=12\cdots n$.

For the given signed permutation $a$, we associate the sequence $s=s(a)$ as follows
$$
s=s_0s_1s_2\cdots s_{2n}=0a_1a_2\cdots a_n (-a_n)(-a_{n-1})\cdots (-a_2)(-a_1),
$$
i.e., $s_0=0$ and $s_{k}=-s_{2n+1-k}$ for $1\leq k\leq 2n$. Furthermore, such sequences
will be referred to as skew-symmetric sequences since we have $s_{k}=-s_{2n+1-k}$. A sequence
$s$ is called exact if there exists $s_i<0$ for some $1\le i\le n$.
The reversal distance of $a$ is equal to the block-interchange distance of $s$
into
$$
e_n^{\natural}=012\cdots n (-n)(-n+1)\cdots (-2) (-1),
$$
where only certain block-interchanges are allowed, i.e., only the actions $\chi_h$,
$h=(i,j,2n+1-j,2n+1-i)$ are allowed where $1\leq i\leq j\leq n$. Hereafter, we will denote
these particular block-interchanges on $s$ as reversals, $\varrho_{i,j}$.

Let
\begin{eqnarray*}
\tilde{s}&=&(s)=(0,a_1,a_2,\ldots,a_{n-1},a_n,-a_n,-a_{n-1},\ldots,-a_2,-a_1),\\
p_r&=&(-1,-2,\ldots,-n+1,-n,n,n-1,\ldots,2,1,0).
\end{eqnarray*}
A plane permutation of the form $(\tilde{s},\pi)$ will be called skew-symmetric.

\begin{theorem}\label{7thm1}
\begin{equation}\label{5eq1}
d_r(a)\geq \frac{2n+1-C(p_r\tilde{s})}{2}.
\end{equation}
\end{theorem}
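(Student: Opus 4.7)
I would follow the strategy of Theorem~\ref{5thm1}, applied to skew-symmetric plane permutations, and then specialize the auxiliary permutation to the identity.

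First I would form the plane permutation $\mathfrak{p}=(\tilde{s},\pi)$ for a choice of $\pi$ that will be fixed later. As the discussion before the theorem makes clear, a reversal $\varrho_{i,j}$ acting on $a$ corresponds, under the encoding $a\mapsto\tilde{s}$, to the block-interchange $\chi_h$ on $\mathfrak{p}$ with $h=(i,j,2n+1-j,2n+1-i)$; this becomes a transpose precisely when $j=n$ (so that $j+1=2n+1-j$). Hence a sequence of $d_r(a)$ reversals sorting $a$ into $e_n$ corresponds to a sequence of $d_r(a)$ block-interchanges bringing $\mathfrak{p}$ to $(\tilde{e}_n,\beta)$ for some $\beta$, where $\tilde{e}_n=(0,1,\ldots,n,-n,-n+1,\ldots,-1)$.

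Second, I would use diagonal invariance: the relation $\tilde{s}\pi^{-1}=D_{\mathfrak{p}}=\tilde{e}_n\beta^{-1}$ forces $\beta=\pi\tilde{s}^{-1}\tilde{e}_n$. By Lemma~\ref{2lem3} (for the transpose case $j=n$) and Lemma~\ref{2lem5} (for $j<n$, where $j+1<2n+1-j$), each such block-interchange changes the number of cycles by at most $2$, so
\begin{equation*}
d_r(a)\;\geq\;\frac{|C(\beta)-C(\pi)|}{2}=\frac{|C(\pi\tilde{s}^{-1}\tilde{e}_n)-C(\pi)|}{2}
\end{equation*}
for any choice of $\pi$. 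Third, I would specialize to $\pi=\mathrm{id}$, which has $C(\pi)=2n+1$. After verifying by direct inspection of the two cycle expressions that $p_r=\tilde{e}_n^{-1}$, the standard identities $C(UV)=C(VU)$ and $C(U)=C(U^{-1})$ give
\begin{equation*}
C(\tilde{s}^{-1}\tilde{e}_n)=C(\tilde{e}_n\tilde{s}^{-1})=C\bigl((\tilde{s}\tilde{e}_n^{-1})^{-1}\bigr)=C(p_r\tilde{s}),
\end{equation*}
and since $C(p_r\tilde{s})\leq 2n+1$ the inequality collapses to $d_r(a)\geq(2n+1-C(p_r\tilde{s}))/2$, as claimed.

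The main obstacle is the bookkeeping in the first step: one must check that the skew-symmetric encoding $a\mapsto\tilde{s}$ intertwines $\varrho_{i,j}$ with exactly the block-interchange $\chi_h$ indexed by $h=(i,j,2n+1-j,2n+1-i)$, and in particular that swapping the block at positions $[i,j]$ with the mirror block at positions $[2n+1-j,2n+1-i]$ of $\tilde{s}$ produces precisely the negated, order-reversed segment required by the definition of $\varrho_{i,j}$. Once this correspondence and the identification $p_r=\tilde{e}_n^{-1}$ are in place, everything else is a direct instance of the cycle-counting argument already developed in Section~3, with the restriction to reversals only tightening (never loosening) the lower bound relative to the general block-interchange distance.
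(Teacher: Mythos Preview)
Your proposal is correct and follows essentially the same idea as the paper: reversals on $a$ are a restricted class of block-interchanges on the skew-symmetric encoding $\tilde{s}$, so the cycle-counting lower bound for block-interchanges applies. The only difference is packaging: the paper simply invokes Theorem~\ref{5thm2} (Christie's exact formula $bid(s)=\frac{2n+1-C(p_r\tilde{s})}{2}$, transported to $[n]^*\cup[n]^-$) to get $d_r(a)\ge bid(s)=\frac{2n+1-C(p_r\tilde{s})}{2}$, whereas you re-run the Theorem~\ref{5thm1}-style argument from scratch and specialize $\pi=\mathrm{id}$---but since only the lower-bound half of Theorem~\ref{5thm2} is needed here, the two routes coincide.
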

\begin{proof} Since reversals are restricted block-interchanges, the reversal distance will be
bounded by the block-interchange distance without restriction. Theorem~\ref{5thm2} then implies Eq.~\eqref{5eq1}. \qed
\end{proof}

Our approach gives rise to the question of how potent the restricted block-interchanges are.
Is it difficult to find a block-interchange increasing the number of cycles by $2$ that is a
reversal (i.e., $2$-reversal)?

We will call a plane permutation $(\tilde{s},\pi)$ exact, skew-symmetric if
$\tilde{s}$ is exact and skew-symmetric. The following lemma will show that there is almost
always a $2$-reversal.

\begin{lemma}\label{7lem1}
Let $\mathfrak{p}=(\tilde{s},\pi)$ be exact and skew-symmetric on $[n]^*\cup[n]^{-}$,
where $D_{\mathfrak{p}}=p_r^{-1}$. Then, there always exist $i-1$ and $2n-j$ such that
\begin{equation}
\pi({s}_{i-1})={s}_{2n-j},
\end{equation}
where $0\leq i-1\leq n-1$ and $n+1\leq 2n-j \leq 2n$. Furthermore,
we have the following cases
\begin{itemize}
\item[(a)] If $s_{i-1}<_s s_{j}<_s s_{2n-j} <_s s_{2n+1-i}$, then
\begin{equation}
\pi({s}_{i-1})={s}_{2n-j}, \quad \pi(s_{j})=s_{2n+1-i}.
\end{equation}
\item[(b)] If $s_{j}<_s s_{i-1}<_s s_{2n+1-i}<_s s_{2n-j} $, then
\begin{equation}
\pi({s}_{i-1})={s}_{2n-j},\quad \pi(s_{j})=s_{2n+1-i}.
\end{equation}
\end{itemize}
\end{lemma}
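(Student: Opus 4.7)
The plan is to read off both claims from the identity $\pi = p_r \circ \tilde{s}$, forced by $D_{\mathfrak{p}} = p_r^{-1}$ together with $D_{\mathfrak{p}}(\pi(s_{i-1})) = s_i$; in particular $\pi(s_{i-1}) = p_r(s_i)$ for every $i$. So on the ``first half'' $A = \{s_0, \ldots, s_{n-1}\}$ one has $\pi(A) = p_r(\{s_1, \ldots, s_n\}) = p_r(F)$, where $F = \{a_1, \ldots, a_n\}$, and the existence assertion reduces to $p_r(F) \cap (-F) \neq \emptyset$, since $B := \{s_{n+1}, \ldots, s_{2n}\} = -F$.

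I would establish this intersection is non-empty by contradiction. Assume $p_r(F) \cap (-F) = \emptyset$; then every element of $F$ has its $p_r$-image in $\{0\} \cup F$, i.e.\ in the cyclic listing of $p_r$ the transition ``$F \to -F$'' never occurs. Consequently, each maximal run of $F$-elements in the cycle of $p_r$ must end at an element whose $p_r$-image is $0$, namely at $1 = p_r^{-1}(0)$. As $0$ occurs only once in the cycle there is at most one such run, and since $|F| = n$ this run is the $n$-element arc ending at $1$, which by $p_r^{-(n-1)}(1) = n$ equals $\{n, n-1, \ldots, 1\} = [n]$. But then every $a_i$ is positive, contradicting the exactness of $\tilde{s}$.

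The companion relation $\pi(s_j) = s_{2n+1-i}$ I would derive from the skew-symmetry $\iota\, p_r\, \iota = p_r^{-1}$, with $\iota(x) = -x$ (equivalently $p_r(-p_r(x)) = -x$), which is immediate from the cycle form of $p_r$. Using $s_{2n+1-k} = -s_k$, the equation $\pi(s_{i-1}) = s_{2n-j}$ reads $p_r(s_i) = -s_{j+1}$; substituting into $p_r(-p_r(s_i)) = -s_i$ gives $p_r(s_{j+1}) = -s_i = s_{2n+1-i}$, i.e.\ $\pi(s_j) = s_{2n+1-i}$. Since $i-1, j \in \{0, \ldots, n-1\}$ and $2n-j, 2n+1-i \in \{n+1, \ldots, 2n\}$, the cyclic order of the four positions is determined by the sign of $(i-1) - j$: the case $i-1 < j$ produces ordering~(a) and the case $j < i-1$ produces ordering~(b); the degenerate $i-1 = j$ collapses both equations into a single one and needs no separate treatment.

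The step I expect to require the most care is the arc argument forcing $F = [n]$. Its essential ingredients are that $p_r$ is a single $(2n+1)$-cycle (so no proper non-empty subset is $p_r$-invariant), that $0$ has the unique $p_r$-predecessor $1$ (pinning down the end of the arc), and that $p_r^{-(n-1)}(1) = n$ (pinning down its start); everything else is bookkeeping.
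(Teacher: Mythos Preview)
Your argument is correct, and it takes a genuinely different route from the paper's proof. For the existence claim, the paper picks a \emph{specific} index: it lets $s_i$ be the smallest negative entry among $s_1,\ldots,s_n$ and checks directly, splitting on whether $s_i=-n$, that $\pi(s_{i-1})=D_{\mathfrak p}^{-1}(s_i)$ lands in the second half. You instead argue globally: the assertion is equivalent to $p_r(F)\cap(-F)\neq\emptyset$, and the arc argument on the single cycle of $p_r$ forces $F=[n]$ if this fails, contradicting exactness. For the companion identity $\pi(s_j)=s_{2n+1-i}$, the paper verifies cases~(a) and~(b) separately by writing out the relevant two-row fragments; you derive it once from the conjugation symmetry $\iota\,p_r\,\iota=p_r^{-1}$, which in fact shows the identity holds whenever $\pi(s_{i-1})=s_{2n-j}$, independent of the ordering of the four positions.

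The paper's approach is more constructive (it hands you a concrete $i$, useful if one later wants an algorithm), while yours is more structural: it isolates exactly why exactness matters (otherwise $F=[n]$) and makes transparent that cases~(a) and~(b) are not really separate cases but the two possible orderings of a single symmetric relation. Your observation that $i-1=j$ collapses both equations is also a nice clarification the paper leaves implicit.
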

\begin{proof} We firstly prove the former part.
Assume $s_{i}$ is the smallest negative element among the subsequence $s_1s_2\cdots s_n$.
If $s_{i}=-n$, then we have $s_{2n+1-i}=-s_i=n$ by symmetry. Since $D_{\mathfrak{p}}=p_r^{-1}$,
for any $k$, $s_{k+1}=p_r^{-1}(s_{k})=s_k+1$ where $n+1$ is interpreted as $-n$. Thus,
$\pi(s_{i-1})=D_{\mathfrak{p}}^{-1}(s_i)=D_{\mathfrak{p}}^{-1}(-n)=n=s_{2n+1-i}$.
Let $2n-j=2n+1-i$, then $2n-j\geq n+1$ and we are done.
If $s_{i}>-n$, then we have $\pi(s_{i-1})=D_{\mathfrak{p}}^{-1}(s_i)=s_{i}-1\geq -n$.
Since $s_{i}$ is the smallest negative element among $s_t$ for $1\leq t\leq n$,
if $s_{2n-j}=s_{i}-1< s_i$, then $2n-j\geq n+1$, whence the former part.

Using $D_{\mathfrak{p}}=p_r^{-1}$ and the skew-symmetry $s_k=-s_{2n+1-k}$,
we have in case of (a) the following situation in $\mathfrak{p}$ (only relevant entries are illustrated)
\begin{eqnarray*}
\left(\begin{array}{cccccccccccc}
i-1 & i &\cdots & j & j+1 & \cdots & 2n-j & 2n+1-j & \cdots & 2n+1-i \\\hline
s_{i-1}&(s_{2n-j}+1)&\cdots & s_{j}&-s_{2n-j}&\cdots & s_{2n-j}&-s_{j}& \cdots & (-s_{2n-j}-1) \\
s_{2n-j}&\diamondsuit & \cdots & (-s_{2n-j}-1)&\diamondsuit &\cdots &\diamondsuit&\diamondsuit &\cdots & \diamondsuit
\end{array}\right).
\end{eqnarray*}
Therefore, we have
\begin{eqnarray*}
\pi(s_{i-1})=s_{2n-j},\quad \pi(s_{j})=-s_{2n-j}-1=s_{2n+1-i}.
\end{eqnarray*}
Analogously we have in case of $(b)$ the situation
\begin{eqnarray*}
\left(\begin{array}{cccccccccccc}
j & j+1 & \cdots & i-1 & i &\cdots &  2n+1-i & 2n+2-i & \cdots & 2n-j \\\hline
s_{j}& -s_{2n-j}&\cdots & s_ {i-1}& s_{2n-j}+1 &\cdots & -s_{2n-j}-1 &-s_{i-1}& \cdots & s_{2n-j} \\
-s_{2n-j}-1 &\diamondsuit & \cdots & s_{2n-j}&\diamondsuit &\cdots &\diamondsuit&\diamondsuit &\cdots & \diamondsuit
\end{array}\right).
\end{eqnarray*}
Therefore, we have
\begin{eqnarray*}
\pi(s_{{i-1}})=s_{2n-j},\quad \pi(s_{j})=-s_{2n-j}-1=s_{2n+1-i}.
\end{eqnarray*}
This completes the proof. \qed
\end{proof}

{\bf Remark.} The pair $s_{i-1}$ and $s_{2n-j}$ such that $\pi({s}_{i-1})={s}_{2n-j}$ is not unique.
For instance, assume the positive integer $k$, $1 \leq k \leq n-1$, is not in the subsequence $s_1s_2
\cdots s_n$ but $k+1$ is, then $\pi^{-1}(k)$ and $k=D_{\mathfrak{p}}^{-1}(k+1)$ form such a pair.


Inspection of Lemma~\ref{2lem5} and Lemma~\ref{7lem1} shows that
there is almost always a $2$-reversal for signed permutations.
The only critical cases, not covered in Lemma~\ref{7lem1}, are
\begin{itemize}
\item The signs of all elements in the given signed permutation are positive.
\item Exact signed permutation which for $1\leq i \leq n$ and $ n+1\leq 2n-j$,
$\pi(s_{i-1})=s_{2n-j}$ iff $2n-j=2n+1-i$.
\end{itemize}
We proceed to analyze the latter case. Since $\pi(s_{i-1})=s_{2n+1-i}=-s_i$, we have
\begin{eqnarray*}
\left(\begin{array}{cccccccc}
s_{i-1}&s_{i}&\cdots& s_n&-s_n&\cdots &s_{2n+1-i}&s_{2n+2-i}\\
\pi(s_{i-1})&\diamondsuit&\cdots &\diamondsuit&\diamondsuit&\cdots &\diamondsuit&\diamondsuit
\end{array}\right)
=\left(\begin{array}{cccccccc}
s_{i-1}&s_{i}&\cdots& s_n&-s_n&\cdots &-s_{i}&-s_{i-1}\\
-s_{i}&\diamondsuit&\cdots &\diamondsuit&\diamondsuit&\cdots &\diamondsuit&\diamondsuit
\end{array}\right).
\end{eqnarray*}

Due to $D_{\mathfrak{p}}$, $D_{\mathfrak{p}}(-s_{i})=s_{i}=-s_{i}+1$ (note that $n+1$ is interpreted as $-n$).
The only situation satisfying this condition is that $s_{i}=-n$, i.e.,
the sign of $n$ in the given signed permutation is negative.
Then, we have $\pi(s_{i-1})=s_{2n-j}=s_{2n+1-i}=n$.
We believe that in this case Lemma~\ref{2lem4} (instead of Lemma~\ref{2lem5}) provides a $2$-reversal.
Namely, $s_{i-1}$ (i.e., the preimage of $n$),~$s_n$ and $n=s_{2n+1-i}$ will form a Case~$2$ transpose
in Lemma~\ref{2lem4}, which will be true if $n$ and $s_n$ are in the same cycle of $\pi$, i.e.,
$\pi$ has a cycle $(s_{i-1}, s_{2n+1-i},\ldots s_n, \ldots)$.
In order to illustrate this we consider
\begin{example}
\begin{eqnarray*}
\left(\begin{array}{ccccccccc}
0&-3&1&2&-4&4&-2&-1&3\\
-4&0&1&4&3&-3&-2&2&-1
\end{array}\right)&=&(0,-4,3,-1,2,4,-3)(1)(-2)\\
\left(\begin{array}{ccccccccc}
0&2&-4&-1&3&-3&1&4&-2\\
1&4&-2&2&-4&0&3&-3&-1
\end{array}\right)&=&(0,1,3,-4,-2,-1,2,4,-3)
\end{eqnarray*}
We inspect, that in the first case $s_{i-1}=2$,~$s_n=-4$ and $n=4$ form a Case~$2$ transpose of Lemma~\ref{2lem4}.
In the second case $s_{i-1}=2$,~$s_n=3$ and $n=4$ form again a Case~$2$ transpose of Lemma~\ref{2lem4}.
\end{example}

Therefore, we conjecture
\begin{conjecture}\label{7conj1}
Let $\mathfrak{p}=(\tilde{s},\pi)$ be exact, skew-symmetric on $[n]^*\cup[n]^{-}$
where $D_{\mathfrak{p}}=p_r^{-1}$ and suppose $\pi(s_{i-1})=s_{2n+1-i}$, where $1\leq i\leq n$.
Then, $n$ and $s_n$ are in the same cycle of $\pi$.
\end{conjecture}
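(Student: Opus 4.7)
The plan is to exploit a global involutive symmetry of $\pi$ induced by the skew-symmetry of $\tilde{s}$ and the cyclic structure of $p_r$. Define $\tau(x) = p_r(-x)$, an involution on $[n]^* \cup [n]^-$ satisfying $\tau(x) = -x - 1$ for $x \neq n$ and $\tau(n) = n$; in particular $n$ is its unique fixed point. Writing $\iota(x) = -x$, direct inspection of the definitions yields $\iota\, p_r\, \iota = p_r^{-1}$ and $\iota s\, \iota = s^{-1}$, and since $D_{\mathfrak{p}} = p_r^{-1}$ forces $\pi = p_r \circ s$, we obtain the conjugation relation $\tau \pi \tau^{-1} = \pi^{-1}$. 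Consequently $\tau$ permutes the $\pi$-cycles setwise while reversing their cyclic orientation.

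I would next single out $s_n$ as the canonical object in the picture. The skew-symmetry $s_{n+1} = -s_n$ gives $\pi(s_n) = p_r(-s_n) = \tau(s_n)$, and an index comparison based on $s_{k+1} = -s_k \Leftrightarrow k+1 \equiv 2n+1-k \pmod{2n+1}$ shows that $s_n$ is the unique element $a$ with $s(a) = -a$, hence the unique element with $\pi(a) = \tau(a)$. Since $\pi(s_n) = \tau(s_n)$ belongs to the $\pi$-cycle of $s_n$, that cycle is $\tau$-invariant; similarly the $\pi$-cycle $C_n$ containing $n$ is $\tau$-invariant, because $\tau(n) = n$.

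The main step is then to show $C_n$ is the only $\tau$-invariant $\pi$-cycle. On any such cycle $(a_0, a_1, \ldots, a_{\ell-1})$, with $\pi(a_k) = a_{k+1 \bmod \ell}$, the relation $\tau \pi \tau^{-1} = \pi^{-1}$ forces $\tau$ to act by a reflection $a_k \mapsto a_{M-k}$ for some integer $M$ (indices mod $\ell$). Its fixed points solve $2k \equiv M \pmod{\ell}$. For odd $\ell$ there is a unique solution, which must be the unique global $\tau$-fixed point $n$, so the cycle is $C_n$. For even $\ell$, two cycle-fixed points are excluded (since $n$ is the only global $\tau$-fixed point), so $M$ must be odd; but then the equation $\pi(a_k) = \tau(a_k)$, equivalent to $2k \equiv M-1 \pmod{\ell}$, admits exactly two distinct solutions, producing two distinct cycle elements each forced by the uniqueness established above to equal $s_n$, a contradiction. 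Hence no even-length $\tau$-invariant cycle exists, and the $\tau$-invariant cycle of $s_n$ must coincide with $C_n$. In particular, $n$ and $s_n$ lie in the same $\pi$-cycle.

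The main obstacle is the reflection/parity argument in the third paragraph: one must simultaneously track the parity of the cycle length $\ell$ and of the reflection parameter $M$, and then combine the count of global $\tau$-fixed points with the uniqueness of $s_n$ as a solution of $\pi(x) = \tau(x)$ to extract the contradiction. The remaining ingredients—establishing $\tau \pi \tau^{-1} = \pi^{-1}$ and the uniqueness statement for $s_n$—are direct verifications from the definitions of $p_r$, $\tilde{s}$, and $\pi$.
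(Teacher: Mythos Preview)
The paper does not prove this statement: it is left as an open conjecture, and the paper even records a more general form as Conjecture~\ref{7conj2}. So there is no proof in the paper to compare yours against.

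That said, your argument is correct, and it actually establishes the stronger Conjecture~\ref{7conj2}: you never use the exactness hypothesis on $\tilde{s}$, nor the assumption $\pi(s_{i-1})=s_{2n+1-i}$. The verifications $\iota p_r\iota=p_r^{-1}$ and $\iota\tilde{s}\iota=\tilde{s}^{-1}$ (the latter from the skew-symmetry $s_k=-s_{2n+1-k}$, valid for all $k$ modulo $2n+1$ once one notes $s_0=0$) combine with $\pi=p_r\tilde{s}$ to give $\tau\pi\tau^{-1}=\pi^{-1}$ for $\tau=p_r\iota$, exactly as you say. The facts that $n$ is the unique $\tau$-fixed point and that $s_n$ is the unique solution of $\pi(x)=\tau(x)$ (equivalently $\tilde{s}(x)=-x$) are both immediate and both needed. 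Your reflection/parity dichotomy on a $\tau$-invariant $\pi$-cycle is the crux and is sound: an odd-length $\tau$-invariant cycle must carry the unique $\tau$-fixed point $n$, while an even-length $\tau$-invariant cycle would be forced (for one parity of the reflection parameter $M$) to contain two global $\tau$-fixed points, or (for the other parity) to contain two distinct solutions of $\pi(x)=\tau(x)$, either of which is impossible. Hence the $\tau$-invariant cycle through $s_n$ is $C_n$, and $n$ and $s_n$ lie in the same $\pi$-cycle.

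In short: your proposal settles both conjectures of the paper, not merely the one you were asked about.
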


Lemma~\ref{7lem1}, Conjecture~\ref{7conj1} and an analysis of the preservation of exactness
under $2$-reversals suggest, that for a random signed permutation, it is likely to be
possible to transform $s$ into $e_n^{\natural}$ via a sequence of $2$-reversals.
In fact, many examples, including Braga~\cite[Table $3.2$]{braga}, indicate that the lower bound of
Theorem~\ref{7thm1} gives the exact reversal distances.

Note that the lower bound obtained in~\cite{pev2,pev4} via the break point graph also provides
the exact reversal distance for most of signed permutations although the exact reversal distance
was formulated later in~\cite{pev3}.
Now we give a brief comparison of our formula Eq.~\eqref{5eq1} and the lower bound via break point graph.
The break point graph for a given signed permutation $a=a_1a_2\cdots a_n$ on $[n]$ can be obtained as follows:
replacing $a_i$ with $(-a_i) a_i$, and adding $0$ at the beginning of the obtained sequence while
adding $-(n+1)$ at the end of the obtained sequence,
in this way we obtain a sequence $b=b_0b_1b_2\cdots b_{2n}b_{2n+1}$ on $[n]^* \cup [n+1]^{-}$.
Draw a black edge between $b_{2i}$ and $b_{2i+1}$, as well as a grey edge between $i$ and $-(i+1)$
for $0\leq i \leq n$. The obtained graph is the break point graph $BG(a)$ of $a$.
Note that each vertex in $BG(a)$ has degree two so that it can be decomposed into disjoint cycles.
Denote the number of cycles in $BG(a)$ as $C_{BG}(a)$. Then, the lower bound via the break point graph is
\begin{align}\label{bg-r}
d_r(a) \geq n+1-C_{BG}(a).
\end{align}
Looking into this formula and the break point graph, we can actually formulate Eq.~\eqref{bg-r} into
the form similar to our lower bound.
Let $\theta_1$,~$\theta_2$ be the two involutions (without fixed points) determined by
the black edges and grey edges in the break point graph, respectively, i.e.,
\begin{align*}
\theta_1 &=(b_0,b_1)(b_2,b_3)\cdots (b_{2n},b_{2n+1}),\\
\theta_2 &=(0,-1)(1,-2)\cdots (n,-n-1).
\end{align*}
It is not hard to observe that $C_{BG}(a)=\frac{C(\theta_1 \theta_2)}{2}$.
Therefore, we have
\begin{proposition}
\begin{align}\label{bp-r2}
d_r(a)\geq \frac{2n+2- C(\theta_1 \theta_2)}{2}.
\end{align}
\end{proposition}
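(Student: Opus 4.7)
The plan is to deduce the inequality directly from the already displayed breakpoint-graph bound \eqref{bg-r} by substituting the combinatorial identity $C_{BG}(a)=\tfrac{1}{2}C(\theta_1\theta_2)$ that is asserted immediately before the proposition. Once that identity is in hand, the proof is one line:
\[
d_r(a)\ \geq\ n+1-C_{BG}(a)\ =\ n+1-\frac{C(\theta_1\theta_2)}{2}\ =\ \frac{2n+2-C(\theta_1\theta_2)}{2}.
\]
So all of the actual content sits in verifying $C(\theta_1\theta_2)=2\,C_{BG}(a)$, and that is what I would spell out carefully.

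To prove that identity, I would use the alternating structure of $BG(a)$. By construction, every vertex $b\in\{b_0,\ldots,b_{2n+1}\}$ meets exactly one black edge and exactly one grey edge; equivalently, $\theta_1$ and $\theta_2$ are fixed-point-free involutions on the same underlying set. Consequently $BG(a)$ decomposes into edge-disjoint closed walks whose edges alternate in colour, and each such walk has even length $2k$. Fix a cycle of length $2k$ and label its vertices $v_0,v_1,\ldots,v_{2k-1}$ in cyclic order so that $\{v_{2r},v_{2r+1}\}$ is black and $\{v_{2r+1},v_{2r+2}\}$ is grey for every $r$ (indices modulo $2k$). Then $\theta_1(v_{2r})=v_{2r+1}$ and $\theta_2(v_{2r+1})=v_{2r+2}$, so $\theta_2\theta_1$ maps $v_{2r}\mapsto v_{2r+2}$. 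The even-indexed vertices therefore form a single orbit $(v_0,v_2,\ldots,v_{2k-2})$ of $\theta_2\theta_1$ of length $k$, and by the symmetric argument the odd-indexed vertices form a second, disjoint orbit of length $k$. Thus each cycle of $BG(a)$ of length $2k$ contributes exactly two cycles of length $k$ to $\theta_2\theta_1$; summing over all cycles of $BG(a)$ gives $C(\theta_2\theta_1)=2\,C_{BG}(a)$, and since $\theta_1\theta_2$ is conjugate to $\theta_2\theta_1$ (via $\theta_1$), the same holds for $\theta_1\theta_2$.

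The step that most needs care, and where I expect the only minor obstacle to lie, is the bookkeeping around colour alternation and the disjointness of the two orbits on a single $BG$-cycle. Both are consequences of $\theta_1,\theta_2$ being fixed-point-free involutions whose supports together cover each vertex exactly twice, so no vertex is incident to two edges of the same colour and the even- and odd-indexed labellings on a cycle of length $2k$ are genuinely distinct. Granting this, the identity $C_{BG}(a)=\tfrac{1}{2}C(\theta_1\theta_2)$ holds, and plugging into \eqref{bg-r} yields the proposition.
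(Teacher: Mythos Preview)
Your proposal is correct and follows exactly the route the paper takes: the paper simply records that ``it is not hard to observe that $C_{BG}(a)=\tfrac{C(\theta_1\theta_2)}{2}$'' and then states the proposition as an immediate consequence of Eq.~\eqref{bg-r}. Your write-up merely supplies the details behind that ``not hard to observe'' step, so there is nothing substantively different between the two arguments.
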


Since both our lower bound Eq.~\eqref{5eq1} and the lower bound Eq.~\eqref{bp-r2} provide
the exact reversal distance for most of signed permutations, that suggests, for most of signed permutations,
$$
C(p_r \tilde{s})= C(\theta_1\theta_2)-1=2 C_{BG}(a)-1.
$$
Is it always true?

At last, we present the following generalization of Conjecture~\ref{7conj1}:

\begin{conjecture}\label{7conj2}
Let $\mathfrak{p}=(\tilde{s},\pi)$ be skew-symmetric on $[n]^*\cup[n]^{-}$
where $D_{\mathfrak{p}}=p_r^{-1}$.
Then, $n$ and $s_n$ are in the same cycle of $\pi$.
\end{conjecture}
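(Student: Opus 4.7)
The plan is to introduce an involution $\rho$ on $V = [n]^* \cup [n]^-$ that conjugates $\pi$ to $\pi^{-1}$ and whose unique fixed point is $a_n$. The $\pi$-cycle $T$ containing $n$ will then be $\rho$-invariant, and showing that $|T|$ is odd forces the midpoint of $T$ to be this unique fixed point, so $a_n \in T$.

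Set $\iota(v) = -v$ (with $\iota(0) = 0$) and define $\rho(v) = -\tilde{s}(v)$, i.e., $\rho = \iota \circ \tilde{s}$. The skew-symmetry $s_k = -s_{2n+1-k}$ is precisely $\iota \tilde{s} \iota = \tilde{s}^{-1}$, so $\rho^2 = \mathrm{id}$; moreover $\rho(v) = v$ iff $\tilde{s}(v) = -v$, which for $v = s_k$ reduces to $s_{k+1} = s_{2n+1-k}$, hence $k = n$, giving the unique fixed point $s_n = a_n$. A direct case-check on the values of $p_r$ also yields $\iota p_r \iota = p_r^{-1}$ (the only subtle case being $v = n$, which uses the jump $p_r(-n) = n$), and combining the two conjugation identities produces $\rho \pi \rho = \pi^{-1}$. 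Because $\rho(n) = \tilde{s}^{-1}(-n) = \pi^{-1}(n)$ already lies in $T$, the cycle $T$ is $\rho$-invariant. Writing $T = (c_0, c_1, \ldots, c_{l-1})$ with $c_0 = n$ and $c_{i+1} = \pi(c_i)$, the relation $\rho \pi = \pi^{-1} \rho$ forces $\rho(c_i) = c_{l-1-i}$.

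The heart of the proof is to show $l$ is odd. Assume for contradiction $l = 2k$. Then $\rho(c_{k-1}) = c_{l-k} = c_k$ and $\pi(c_{k-1}) = c_k$, so $-\tilde{s}(c_{k-1}) = p_r(\tilde{s}(c_{k-1}))$. Writing $w = \tilde{s}(c_{k-1})$, the equation $-w = p_r(w)$ has no solution in $V$ for $w \neq -n$, because then $p_r(w) = w - 1$ and $2w = 1$ has no integer solution. Thus $w = -n$, and $c_k = p_r(-n) = n = c_0$, contradicting $0 < k < l$. Hence $l = 2m+1$ is odd, $\rho(c_m) = c_m$, and by uniqueness of the fixed point $c_m = a_n \in T$. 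The main obstacle is recognizing the right involution $\rho$; once it is in hand, both the conjugation identity $\rho \pi \rho = \pi^{-1}$ and the parity argument for $l$ are short.
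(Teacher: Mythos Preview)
Your argument is correct, and it actually \emph{proves} what in the paper is only stated as an open conjecture (Conjecture~\ref{7conj2}); the paper offers no proof. The conjugation identities $\iota\,\tilde{s}\,\iota=\tilde{s}^{-1}$ and $\iota\,p_r\,\iota=p_r^{-1}$ check out directly from the skew-symmetry $s_k=-s_{2n+1-k}$ and the explicit form of $p_r$, and together they give $\rho\,\pi\,\rho=\pi^{-1}$ for $\rho=\iota\circ\tilde{s}$ and $\pi=p_r\tilde{s}$. Your identification of the unique $\rho$-fixed point as $s_n$ is correct, as is the observation $\rho(n)=\tilde{s}^{-1}(-n)=\pi^{-1}(n)$, which makes the $\pi$-cycle $T$ through $n$ $\rho$-invariant. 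The parity step is the crux and is sound: if $|T|=2k$ then $\rho$ and $\pi$ coincide at $c_{k-1}$, forcing $p_r(w)=-w$ with $w=\tilde{s}(c_{k-1})$; the only solution on $[n]^*\cup[n]^-$ is $w=-n$, which yields $c_k=n=c_0$, impossible. Hence $|T|$ is odd and its $\rho$-midpoint must be the unique fixed point $s_n$. In short, where the paper only formulates the statement and supports a weaker special case (Conjecture~\ref{7conj1}) by examples, you supply a clean symmetry argument that settles the general conjecture; the same argument incidentally proves Conjecture~\ref{7conj1} as well, since exactness plays no role.
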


%

\begin{acknowledgements}
We acknowledge the financial support of the Future and Emerging
Technologies (FET) programme within the Seventh Framework Programme (FP7) for
Research of the European Commission, under the FET-Proactive grant agreement
TOPDRIM, number FP7-ICT-318121.
\end{acknowledgements}

\end{document}